\documentclass[11pt]{amsart}
\usepackage{amsmath,amssymb,amsthm,enumitem}
\usepackage{graphicx}
\usepackage{hyperref}
\usepackage{amsrefs}
\newtheorem{theorem}{Theorem}
\newtheorem{proposition}[theorem]{Proposition}
\newtheorem{corollary}[theorem]{Corollary}
\newtheorem{lemma}[theorem]{Lemma}
\theoremstyle{remark}
\newtheorem{remark}[theorem]{Remark}
\newcommand{\RR}{{\mathbb R}}
\newcommand{\ZZ}{{\mathbb Z}}

\begin{document}
\title{Embedded constant mean curvature tori in the three-sphere}
\author{Ben Andrews}
\address{Mathematical Sciences Institute, Australia National University; Mathematical Sciences
Center, Tsinghua University; and Morningside Center for Mathematics, Chinese Academy of Sciences.}
\email{Ben.Andrews@anu.edu.au}
\author{Haizhong Li}
\address{Department of mathematical sciences, and Mathematical Sciences
Center, Tsinghua University, 100084, Beijing, P. R. China}
\email{hli@math.tsinghua.edu.cn}
%\date{}
\thanks{The research of the first author was partially supported by Discovery Projects grant DP120102462 of the Australian Research Council.
The Research of the second author was supported by NSFC No. 10971110.}
\maketitle

\begin{abstract}
We prove that any constant mean curvature embedded torus in the three dimensional sphere is axially symmetric, and use this to give a complete classification of such surfaces for any given value of the mean curvature.
\end{abstract}

\section{Introduction}

The study of constant mean curvature (CMC) surfaces  in spaces of constant curvature (that is, $\mathbb{R}^3$, the sphere $S^3$, and hyperbolic space $H^3$),  is  one of the oldest subjects in differential geometry. There are many beautiful results on this topic (see for example \cite{Ho1}, \cite {Ho2},   \cite{Ch}, \cite{We}, \cite{Ab}, \cite{PS}, \cite{Bo} and many others).

 The simplest examples of CMC surfaces in $S^3$ are the totally umbilic 2-spheres.  Another basic example is the so-called Clifford torus $T_r\equiv S^1(r)\times S^1(\sqrt{1-r^2}), 0<r<1$.
 Identifying $S^3$ with the unit sphere in $\mathbb{R}^4$,  the Clifford torus $T_r$ is defined for $0<r<1$ by
$$
T_r\equiv \Big \{ (x_1,x_2,x_3,x_4) \in S^3: x_1^2+x_2^2=r^2, x_3^2+x_4^2=1-r^2 \Big \}.
$$

By constructing a holomorphic quadratic differential for CMC surfaces, H. Hopf showed that any CMC two-sphere in $R^3$ is totally umbilical (see \cite{Ho1}). S.~S.~Chern extended Hopf's result to CMC two-spheres  in $3$-dimensional space forms (see \cite{Ch}). H.~C.~Wente was the first (see \cite{We}) to show the existence of compact immersed CMC tori in $\RR^3$. Wente's examples solved the long standing problem of Hopf (see \cite{Ho2}):  Is a compact CMC surface in $\RR^3$ necessarily a round sphere? A. D. Alexandrov  (see \cite{Al}) showed that if a compact CMC surface is embedded in $R^3$, $H^3$ or a hemisphere $S^3_+$, then it must be totally umbilical.  Wente's paper was followed by a series of papers (see \cite{Ab},\cite{PS}, \cite{Bo} and many others), where Wente tori were investigated in detail and other examples of CMC tori were constructed. In particular A. I. Bobenko (\cite{Bo}) constructed CMC tori in $\RR^3$, $S^3$ and $H^3$.

In principle, the construction in \cites{PS, Bo} gives rise to all CMC tori in $S^3$, but reading off properties such as embeddedness from this classification is difficult.
It remains an interesting unsolved problem to classify all embedded CMC tori in $S^3$.  It is explicitly conjectured by Pinkall and Sterling \cite{PS}*{p.250} that such surfaces are surfaces of revolution, and our main result confirms this.  We then give a complete classification of such embedded tori by completing a classification of rotationally symmetric CMC surfaces by Perdomo \cite{Per}.  Our result is as follows:

\begin{theorem}
\label{main.theorem}
\begin{enumerate}[label={(\roman*)}]
\item\label{item:symmetry}  Every embedded CMC torus $\Sigma$ in $S^3$ is a surface of rotation:  There exists a two-dimensional subspace $\Pi$ of $\RR^4$ such that $\Sigma$ is invariant under the group $S^1$ of rotations fixing $\Pi$.
\item\label{item:discretesymm} If $\Sigma$ is an embedded CMC torus which is not congruent to a Clifford torus, then there exists a maximal integer $m\geq 2$ such that $\Sigma$ has $m$-fold symmetry:  Precisely, $\Sigma$ is invariant under the group $\ZZ_m$ generated by the rotation which fixes the orthogonal plane $\Pi^\perp$ and rotates $\Pi$ through angle $2\pi/m$.
\item\label{item:uniqueness}  For given $m\geq 2$, there exists at most one such CMC torus (up to congruence).
\item\label{item:existence}  For given $m\geq 2$, there exists an embedded CMC torus with mean curvature $H$ and maximal symmetry $S^1\times\ZZ_m$ if $|H|$ lies strictly between $\cot\frac{\pi}{m}$ and $\frac{m^2-2}{2\sqrt{m^2-1}}$.
\item\label{item:rigidity}
If $H\in\{0,\frac{1}{\sqrt{3}},-\frac{1}{\sqrt{3}}\}$ then every embedded torus with mean curvature $H$ is congruent to the Clifford torus.
\end{enumerate}
\end{theorem}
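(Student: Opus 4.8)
The plan is to prove \ref{item:symmetry} by an Alexandrov-type moving-planes argument adapted to the geometry of $S^3$, and then to deduce \ref{item:discretesymm}--\ref{item:rigidity} from the qualitative analysis of the ordinary differential equation governing the profile curve of a rotationally symmetric CMC surface, completing the analysis begun by Perdomo \cite{Per}. The symmetry statement \ref{item:symmetry} is where essentially all the difficulty lies. I begin with a topological preliminary: since a compact embedded CMC surface lying in a hemisphere of $S^3$ is totally umbilic by Alexandrov's theorem \cite{Al}, our torus $\Sigma$ is not contained in a hemisphere, hence not in a ball, so $S^3\setminus\Sigma$ has two components, at least one of which, $\Omega$, is a solid torus. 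To locate an axis I would use an interior rolling-ball argument: letting $\rho=\max_{x\in\Omega}\operatorname{dist}(x,\Sigma)$ and $K\subset\Omega$ the set where it is attained, comparing the mean curvature of $\Sigma$ with that of the inscribed geodesic spheres of radius $\rho$ and applying the strong maximum principle should force $K$ to be totally geodesic; since $\Sigma$ is not a geodesic sphere, $K$ is neither a point nor a great $2$-sphere, hence a great circle $C$, which is then a core of $\Omega$ and makes $\Sigma$ a standard Heegaard torus. (One obtains similarly a great circle $C'$ disjoint from $\Sigma$ in the other component, automatically Hopf-linked with $C$; after an isometry we may take $C=\Pi\cap S^3$ and $C'=\Pi^\perp\cap S^3$.) Making the assertion about $K$ rigorous — that it is a single smooth closed geodesic and not some arc-system — is the first genuinely delicate point.

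With $\Sigma\subset S^3\setminus(C\cup C')$ in hand, I would run the reflection argument using the open-book decomposition of $S^3\setminus C$ into the totally geodesic half-$2$-spheres $D_\theta$ with boundary $C$; equivalently, using the pencil of great $2$-spheres $P_\theta=\overline{D_\theta}\cup\overline{D_{\theta+\pi}}$ through $C$, whose reflections $R_\theta$ act on the pages by $D_{\theta+s}\mapsto D_{\theta-s}$. Since $\Sigma$ is disjoint from $C$ and winds once around it, each half $\Sigma_\theta^+:=\Sigma\cap\bigcup_{0<s<\pi}D_{\theta+s}$ is an annulus whose reflection $R_\theta(\Sigma_\theta^+)$ meets $\Sigma$ only along the shared boundary curves $\Sigma\cap(D_\theta\cup D_{\theta+\pi})$; decreasing (or increasing) $\theta$ until $R_\theta(\Sigma_\theta^+)$ first touches $\Sigma$ at some other point and then applying the interior and boundary tangency principles for CMC hypersurfaces produces a $\theta_0$ with $R_{\theta_0}(\Sigma)=\Sigma$. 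Running the argument from both sides and arguing that the set of reflection symmetries obtained this way cannot be isolated, I would conclude that $\Sigma$ is invariant under the group generated by these reflections, which is the full circle of rotations fixing $\Pi$ — precisely \ref{item:symmetry}. The two points here that require real work, and which together constitute the main obstacle of the paper, are: verifying that the moving-planes process can be initiated (for which one uses the winding number and the disjointness from $C$ and $C'$), and upgrading the reflection symmetries to a continuous rotation group rather than a finite $\ZZ_k$.

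Granted \ref{item:symmetry}, $\Sigma$ is the orbit under the rotations fixing $\Pi$ of a closed profile curve in a totally geodesic hemisphere $S^2_+$ with boundary $C$, and the CMC condition becomes a second-order ODE for this curve which, by the rotational symmetry of the ambient metric about the co-axis direction, admits a first integral — Perdomo's integral. The non-constant periodic solutions form a one-parameter family bifurcating from the constant solutions, which are exactly the Clifford tori $T_r$, and the orbit closes up as an \emph{embedded} torus precisely when the total angle $\Phi$ turned by the profile curve about $C$ over one period equals $2\pi/m$ for a positive integer $m$ (any other rational multiple of $2\pi$ forces self-intersections); $m$ is then the maximal order of discrete symmetry, which is \ref{item:discretesymm}. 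For \ref{item:uniqueness} I would prove that $\Phi$ is a strictly monotone function of the amplitude along the family, so that with $m$ and $H$ fixed the profile, hence $\Sigma$, is unique up to congruence. For \ref{item:existence} I would follow the family from the end where it branches off a Clifford torus — which a linearisation places at $|H|=\frac{m^2-2}{2\sqrt{m^2-1}}$ — to the other end, where the profile curve degenerates and $\Sigma$ converges to a necklace of $m$ geodesic spheres of radius $\pi/m$ strung along the co-axis $C'$, of mean curvature $\cot\frac{\pi}{m}$; continuity and the monotone dependence of $H$ on the amplitude then yield an embedded torus of maximal symmetry $S^1\times\ZZ_m$ for every $|H|$ strictly between these two values and for no $|H|$ outside them.

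Finally \ref{item:rigidity} is an immediate consequence of \ref{item:symmetry}--\ref{item:existence}: the values $H\in\{0,\tfrac1{\sqrt3},-\tfrac1{\sqrt3}\}$ lie in none of the open intervals $\bigl(\cot\tfrac{\pi}{m},\ \tfrac{m^2-2}{2\sqrt{m^2-1}}\bigr)$ — $0$ is the infimum of the interval for $m=2$, while $\tfrac1{\sqrt3}=\cot\tfrac{\pi}{3}=\tfrac{2^2-2}{2\sqrt{2^2-1}}$ is the common endpoint of the intervals for $m=2$ and $m=3$ and $\cot\tfrac{\pi}{m}\ge\tfrac1{\sqrt3}$ for all $m\ge 3$ — so by the classification just established the only embedded CMC tori with these mean curvatures are Clifford tori; in particular the case $H=0$ recovers the uniqueness of the Clifford torus among embedded minimal tori in $S^3$.
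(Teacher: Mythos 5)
Your outline for items \ref{item:discretesymm}--\ref{item:rigidity} is consistent in spirit with the paper (reduction to Perdomo's family and monotonicity of the period/angle function in the amplitude), and your numerology for \ref{item:rigidity} is correct. The genuine gap is in \ref{item:symmetry}, where your proposal takes a fundamentally different route from the paper — an Alexandrov moving-planes scheme with the pencil of great $2$-spheres through an ``axis'' circle $C$ — and this route does not work. The method of moving planes (or rotating planes) needs a starting position in which the reflected cap lies strictly to one side; but here every $P_\theta$ passes through $C\subset\Omega$, so every $P_\theta$ already meets $\Sigma$, and because $\Sigma$ links $C$, the piece $\Sigma_\theta^+$ and its reflection $R_\theta(\Sigma_\theta^+)$ share the full boundary $\Sigma\cap P_\theta$ for every $\theta$. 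There is therefore no ``first touching'' configuration from which to run the comparison; the process is degenerate from the outset, and the usual Hopf boundary-point lemma cannot be invoked because the boundary contact set has interior in the boundary curves. This is precisely the well-known obstruction that kept the $H=0$ case (the Lawson conjecture) open: moving planes is the natural first attempt and it fails for tori in $S^3$ that are not contained in a hemisphere. Moreover, your plan to ``upgrade'' from a single reflection symmetry to the full $S^1$ by arguing the reflections ``cannot be isolated'' is unsubstantiated and, absent additional input, false: a surface can admit exactly one reflection through a plane containing $C$. Your preliminary step (locating $C$ as the set of farthest points in $\Omega$) also requires more than a maximum-principle comparison of $\Sigma$ with inscribed spheres; showing the ridge set is a single smooth great circle is not a local argument.

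What the paper does instead is import the two-point non-collapsing technique of Andrews and Brendle: define $Z(\Phi(x),x,y)=\Phi(x)\bigl(1-F(x)\cdot F(y)\bigr)+\langle F(y),\nu(x)\rangle$ and, choosing $\Phi=\kappa(\lambda_1-H)+H$, run a maximum principle on $\Sigma\times\Sigma$ together with the Simons-type identity for $\mu=\lambda_1-H$ to force $\kappa=1$, i.e.\ the interior ball curvature equals $\lambda_1$ everywhere. This extracts the pointwise identity $(\nabla_{e_1}h)(e_1,e_1)=0$, and the rotational symmetry is then proved directly from the resulting ODE structure along the $e_2$-curves, not from a reflection argument. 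That is an essentially global, variational-in-pairs argument rather than a sweeping one, and it is the key idea you would need to replace your moving-planes step. Your treatment of \ref{item:uniqueness} (monotonicity of the period) corresponds to the paper's Proposition \ref{prop:monotonic}, which is proved by a contour-integral deformation; your sketch is compatible but would need that or an equivalent computation to close.
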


Our contributions are items \ref{item:symmetry} and \ref{item:uniqueness}.  Given these, the remaining results follow from \cite{Per}.  The case $H=0$ of item \ref{item:rigidity} is the Lawson conjecture proved recently by Brendle \cite{Br}.  The rigidity appearing for $H=\pm\frac{1}{\sqrt{3}}$ is unexpected, however.
We note that the embeddedness assumption in Theorem \ref{main.theorem} is crucial:   Bobenko \cite{Bo} has constructed an infinite family of non-rotationally symmetric immersed CMC tori in $S^3$ (see also Brito-Leite \cite{BL}, Wei-Cheng-Li \cite{WCL}, Perdomo \cite{Per}).

Recently the first author proved in \cite{An} a non-collapsing result for mean-convex hypersurfaces in Euclidean space evolving by mean curvature flow.  The estimate was inspired by earlier work of Weimin Sheng and Xujia Wang \cite{SW} (see also \cite{Wh}), who in the course of a detailed analysis of singularity profiles in mean curvature flow proved that for any embedded compact mean-convex hypersurface $M$ moving under the mean curvature flow, there is a positive constant $\delta$ such that at every point $x$ of $M$ there is a sphere of radius $\delta/H(x)$ enclosed by $M$ which touches $M$ at $x$.

The main contribution of \cite{An} was a very direct proof of this non-collapsing result using a maximum principle argument.  In particular, the noncollapsing condition described in the last paragraph was expressed as the positivity of a certain function of pairs of points on the hypersurface, and this function was shown to admit a maximum principle argument to preserve initial positivity.  We will discuss this expression for noncollapsing in section \ref{sec:touching-spheres}.  The idea of working with functions of pairs of points was in turn inspired by earlier work of Huisken \cite{Hu} and Hamilton \cites{HamiltonCSFIsoperim, HamiltonRFIsoperim} for the curve shortening flow and for Ricci flow on surfaces.

In \cite{ALM} this technique was adapted to a family of other flows, and it was shown that the mean curvature used to determine the scale of noncollapsing can be replaced by any function satisfying a certain differential inequality related to the linearisation of the evolution equation.

Recently, Brendle \cite{Br} made a remarkable breakthrough:  He reworked the technique of \cite{An} in such a way that it can be applied to minimal surfaces, and succeeded in proving the Lawson conjecture, that the only embedded minimal torus in $S^3$ is the Clifford torus.  Brendle used the same `non-collapsing' quantity as in \cite{An} (we describe the geometric meaning of this below in section \ref{sec:touching-spheres}), but exploited the additional special structure coming from the minimal surface condition to show that the maximum principle argument still works if  the radii of the touching spheres are compared to the length of the second fundamental form rather than the mean curvature.

In this paper, we again use the non-collapsing argument originating from \cite{An}, together with the modifications introduced by Brendle.  A crucial point is to choose carefully the scale on which to compare the touching spheres:  We show in section \ref{sec:MP} that the difference of the curvatures of the touching spheres from the mean curvature $H$ of $\Sigma$ can be compared in ratio with the difference of the largest principal curvature from $H$ using a maximum principle argument.  This implies (as in Brendle's case) that every point of the surface $\Sigma$ is touched by a ball with boundary curvature equal to the maximum principal curvature at that point.

In Brendle's argument in \cite{Br} one can touch by such spheres on both sides of the surface, and deduce from this that the second fundamental form is parallel, from which the rigidity follows easily.  In our case this is no longer true, and we can only deduce that some components of the derivative of second fundamental form vanish.  However this is enough to conclude (as we do in Section \ref{sec:symmetry}) that the surface is rotationally symmetric.

Perdomo \cite{Per} has given an analysis of rotationally symmetric surfaces with constant mean curvature in $S^3$:  These are constructed from the solutions of a certain differential equation, which are parametrized by a parameter $C$ for each $H$.  Embedded examples
arise from those solutions for which an associated `period' function $K(H,C)$ equals $2\pi/m$ for some positive integer $m$.  Perdomo showed that as $C$ varies the family of CMC surfaces deforms from the Clifford torus to a chain of `kissing spheres', and found the limiting values of $K(H,C)$.  Our second contribution in this paper is to complete the classification by proving that $K(H,C)$ is monotone in $C$, and so takes each value between the limits found by Perdomo exactly once.

It is a pleasure to thank Professor S. -T.~Yau and Professor R.~Schoen for their interest in this topic.  We also express our thanks to graduate student Mr. Zhijie Huang for assistance with checking the proof of monotonicity of $K(H,C)$ (see Proposition \ref{prop:monotonic}).

\section{Touching interior balls}\label{sec:touching-spheres}

The key geometric idea in the non-collapsing argument from \cite{An} is to compare the curvature of enclosed balls touching the surface to a suitable function (mean curvature in the setting of \cite{An}) at the touching point.  We recall here some of the expressions which arise from this picture (see also the discussion in \cite{An} and \cite{ALM}).

Let $M^n=F(\Sigma^n)$ be an embedded hypersurface in $S^{n+1}\subset\RR^{n+2}$ given by an embedding $F$, and bounding a region $\Omega\subset S^{n+1}$.   We choose $\Omega$ in such a way that the unit normal $\nu$ of $\Sigma$ points out of $\Omega$.  For given $x\in\Sigma$ we will derive an inequality which is equivalent to the geometric statement that there is a ball in $\Omega$ of boundary curvature $\Phi$ which touches at $F(x)$.   A geodesic ball in $S^{n+1}$ is simply the intersection of a ball in $\RR^{n+2}$ with $S^{n+1}$.  In particular the ball in $S^{n+1}$ with boundary curvature $\Phi$ which is tangent to $F(\Sigma)$ at the point $F(x)$ is $B=B_{\Phi^{-1}}(p)$, where $p=F(x)-\Phi^{-1}\nu(x)$, and $\nu$ is the unit normal to $F(\Sigma)$ at $F(x)$ in $S^{n+1}$ which points out of $\Omega$.  The statement that this ball lies entirely in $\Omega$ is equivalent to the statement that no other points of $F(\Sigma)$ are inside $B$.  This is in turn equivalent to the statement that for any $y\in\Sigma$, $|F(y)-p|^2\geq \Phi^{-2}$, which can be written as follows:
\begin{equation}\label{eq:defZ1}
|F(y)-(F(x)-\Phi^{-1}\nu(x))|^2-\Phi^{-2}\geq 0.
\end{equation}
Expanding the inner product (and multiplying through by $\Phi/2$) this becomes
$$
Z(\Phi,x,y):= \frac{\Phi}{2}|F(y)-F(x)|^2+\langle F(y)-F(x),\nu(x)\rangle \geq 0.
$$
Since $F(x),F(y)\in S^{n+1}$ we have $|F(x)|^2=|F(y)|^2=1$ and $\langle F(x),\nu(x)\rangle = 0$, so that
\begin{equation}\label{eq:defZ}
Z(\Phi,x,y) = \Phi(1-F(x)\cdot F(y))+\langle F(y),\nu(x)\rangle.
\end{equation}
In summary we have the following:

\begin{proposition}\label{prop:balls}
If $\Phi:\ \Sigma\to\RR$ is a smooth positive function, then the function $Z(\Phi(x),x,y)$ is non-negative for every $x,y\in\Sigma$ if and only if at every point $x\in\Sigma$ there is a ball $B\subset\Omega$ with boundary curvature $\Phi(x)$ with $F(x)\in\bar B$.
\end{proposition}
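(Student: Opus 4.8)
The plan is to unwind the definitions and show that the three displayed conditions in the excerpt are all equivalent, reading the equivalence in both directions. The geometric content is entirely contained in the computation that precedes the statement, so what remains is to organize it into a clean iff argument. First I would fix a point $x\in\Sigma$ and observe that the geodesic ball in $S^{n+1}$ tangent to $F(\Sigma)$ at $F(x)$ from the inside of $\Omega$, with prescribed boundary curvature $\Phi(x)>0$, is uniquely determined: it is $B=B_{\Phi(x)^{-1}}(p)$ with center $p=F(x)-\Phi(x)^{-1}\nu(x)$, since $\nu(x)$ points out of $\Omega$ and the ball must lie on the $\Omega$-side. (Here one uses that a geodesic ball in $S^{n+1}$ is exactly the intersection of a Euclidean ball in $\RR^{n+2}$ with $S^{n+1}$, and that such a ball of the given radius has the correct boundary curvature $\Phi(x)$.) This identifies "there is a ball $B\subset\Omega$ with boundary curvature $\Phi(x)$ and $F(x)\in\bar B$" with "$B_{\Phi(x)^{-1}}(p)\subset\Omega$".

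Next I would translate containment in $\Omega$ into a condition only involving $\Sigma=\partial\Omega$. Since $B$ is an embedded geodesic ball tangent to $\partial\Omega$ at $F(x)$ and lying locally inside $\Omega$ there, the inclusion $B\subset\Omega$ fails exactly when $B$ meets $\partial\Omega=F(\Sigma)$ at some other point, i.e.\ when some $F(y)$ with $y\ne x$ lies in the open ball $B$; equivalently, $B\subset\Omega$ holds iff $|F(y)-p|^2\ge \Phi(x)^{-2}$ for all $y\in\Sigma$. This is precisely inequality \eqref{eq:defZ1} with $\Phi=\Phi(x)$. Then it is a purely algebraic step to expand $|F(y)-F(x)+\Phi(x)^{-1}\nu(x)|^2-\Phi(x)^{-2}$, multiply by $\Phi(x)/2>0$ (which preserves the sign), and use $|F(x)|^2=|F(y)|^2=1$ together with $\langle F(x),\nu(x)\rangle=0$ to arrive at $Z(\Phi(x),x,y)\ge 0$ in the form \eqref{eq:defZ}. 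Running this chain of equivalences for every $x$ gives both directions of the proposition: if $Z(\Phi(x),x,y)\ge 0$ for all $x,y$ then for each $x$ the ball $B$ constructed above lies in $\Omega$ and touches at $F(x)$; conversely, the existence of such balls for all $x$ forces the inequality.

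The one point that needs a little care — and the only place where embeddedness and the global topology of $\Omega$ enter — is the step identifying "$B\subset\Omega$" with "$B$ contains no point of $F(\Sigma)$ other than $F(x)$." This uses that $\Omega$ is the region bounded by the embedded hypersurface $M=F(\Sigma)$, that $\nu$ points out of $\Omega$ so that $B$ lies in $\Omega$ in a neighborhood of $F(x)$, and a connectedness argument: $B\setminus F(\Sigma)$ is connected once $B$ meets no $F(y)$ with $y\neq x$ except tangentially at $F(x)$, so it lies entirely on one side of $M$, namely the $\Omega$-side. I expect this topological/connectedness bookkeeping to be the main (though still routine) obstacle; everything else is the algebraic expansion already carried out in the excerpt. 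I would state it briefly and otherwise let the computation leading to \eqref{eq:defZ} do the work.
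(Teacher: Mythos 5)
Your proposal reproduces exactly the chain of equivalences that the paper carries out in the paragraph preceding the proposition (the paper has no separate proof environment; it simply states the proposition as a "summary" of that discussion): identify the unique tangent ball $B=B_{\Phi(x)^{-1}}(p)$ with $p=F(x)-\Phi(x)^{-1}\nu(x)$, translate $B\subset\Omega$ into the condition that no $F(y)$ lies strictly inside $B$, and expand $|F(y)-p|^2\geq\Phi(x)^{-2}$ algebraically using $|F(x)|=|F(y)|=1$ and $\langle F(x),\nu(x)\rangle=0$ to reach $Z\geq 0$. The only addition you make is to spell out the connectedness step (that $B$ meeting no other point of $F(\Sigma)$ forces $B$ to lie on the $\Omega$-side), which the paper asserts without elaboration; this is correct and does not change the argument.
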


Following \cite{ALM} we call the smallest $\Phi(x)$ for which this is true the \emph{interior ball curvature} of the surface at $x$, and denote it by $\bar\Phi(x)$.  Since a ball of curvature less than the largest principal curvature cannot touch at $x$, we always have $\bar\Phi(x)\geq \lambda(x)$, where $\lambda(x)$ is the largest principal curvature of the surface at $x$.  The main result of section \ref{sec:MP} is that an embedded CMC torus in $S^3$ always has interior ball curvature equal to the maximum principal curvature at every point.  The key result of \cite{Br} is that an embedded minimal torus in $S^3$ always has interior ball curvature equal to the maximum principal curvature and exterior ball curvature equal to minus the minimum principal curvature.

\section{Interior ball curvature equals maximum principal curvature}\label{sec:MP}

Let $F: \Sigma \to S^3$ be an embedded torus in $S^3\subset\RR^4$ with constant mean curvature $H$ (note that we adopt the convention that $H$ is the average of the two principal curvatures, not their sum).  By choosing the direction of the unit normal we can assume
$H\geq 0$.  Our aim in this section is to prove that the interior ball curvature equals the maximum principal curvature, by deriving a contradiction if this is not the case.

Let $\Phi$ be a function on $\Sigma$ with $\Phi>H\geq 0$.  In the following we fix $\Phi$ and denote for brevity $Z(x,y)=Z(\Phi(x),x,y)$ (see equation \eqref{eq:defZ}).  Suppose that $Z({x},{y})\geq 0$ for all $(x,y)\in \Sigma\times\Sigma$ (so that by Proposition \ref{prop:balls} there is a ball with boundary curvature $\Phi(x)$ touching at each point $F(x)$ of the surface), and consider a pair of points $\bar{x}\neq \bar{y}$ such that $Z(\bar{x},\bar{y})=0$. Then $(\bar{x},\bar{y})$ is a minimum point of $Z$ and the differential of $Z$ at the point $(\bar{x},\bar{y})$ vanishes.

We begin by elaborating the geometric picture:
By Proposition \ref{prop:balls}, both $F(\bar{x})$ and $F(\bar{y})$ lie on the boundary of the ball $B$ in $S^3$ of boundary curvature $\Phi(\bar{x})$, which is the intersection with $S^3$ of the ball of radius $\frac{1}{\Phi(\bar{x})}$ in $\RR^4$ centred at the point $p=F(\bar{x})-\frac{1}{\Phi(\bar{x})}\nu(\bar{x})$.   Thus $B=\{z\in S^3:\ z\cdot p\geq 1\}$, so by construction $F(\Sigma)$ lies in the exterior of $B$ and touches at the points $F({\bar x})$ and $F(\bar{y})$.  It follows that the tangent spaces of the surface $F(\Sigma)$ at these points agree with those of the two-dimensional sphere $\partial B$, and the outward unit normals $\nu({\bar x})$ and $\nu(\bar{y})$ agree with those of the sphere also.   Let $\vec{\ell}=\frac{F(\bar{y})-F(\bar{x})}{|F(\bar{y})-F(\bar{x})|}$.  Then the symmetry of the sphere implies that the reflection $R_{\vec{\ell}}:\ z\mapsto z-2(\vec{\ell}\cdot z)\vec{\ell}$ maps the tangent space to $F(\Sigma)$ at $\bar{x}$ to that at $\bar{y}$, and that
\begin{equation}\label{eq:nuy}
\nu(\bar{y})=R_{\vec{\ell}}(\nu(\bar{x}))=\nu(\bar{x})-2\vec{\ell}\cdot\nu(\bar{x})\vec{\ell}
=\nu(\bar{x})+\Phi(\bar{x})d\vec{\ell},
\end{equation}
where we used the equation $Z(\bar{x},\bar{y})=0$ in the last step.

Let $(x_1,x_2)$ be geodesic normal coordinates around $\bar{x}$, and let $(y_1,y_2)$ be geodesic normal coordinates around $\bar{y}$.   We specify further that the tangent vectors
$\left\{\frac{\partial F}{\partial x^i}(\bar{x})\right\}$ diagonalize the second fundamental form,
so that $h_{11}(\bar{x}) = \lambda_1$, $h_{12}(\bar{x}) = 0$, and $h_{22}(\bar{x}) = \lambda_2$, where $\lambda_1>H>\lambda_2$.  We use the following notations:
\begin{equation}\label{eq:defs}
\mu_i=\lambda_i-H,\,\, |{\AA}(\bar{x})|^2=|{A}(\bar{x})|^2-2H^2,
\end{equation}
We also assume that the coordinate tangent vectors at
$\bar{y}$ are defined by reflecting those at $\bar{x}$:
\begin{equation}\label{eq:reflect}
\frac{\partial F}{\partial y^i}(\bar{y})=R_{\vec\ell}\left(\frac{\partial F}{\partial x^i}(\bar{x})\right).
\end{equation}

The key computation is the following:

\begin{proposition}\label{prop:D2Z}
If $\Phi(\bar{x})>\kappa_i(\bar{x})$ for all $i$, then at the point $(\bar{x},\bar{y})$ we have
\begin{align*}
\sum_i\left(\frac{\partial}{\partial x^i}+\frac{\partial}{\partial y^i}\right)^2Z
= \frac{d^2}{2}\left(\Delta\Phi-\sum_i\frac{2|\nabla_i\Phi|^2}{\Phi-\kappa_i}
+\left(|A|^2-2-2H\Phi\right)\Phi+2H\right)\Big|_{\bar{x}}.
\end{align*}
\end{proposition}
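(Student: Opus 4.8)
The plan is to compute the second derivative of $Z$ along the diagonal direction $\partial_{x^i}+\partial_{y^i}$ at the minimum point $(\bar x,\bar y)$ directly from the expression \eqref{eq:defZ}, $Z(\Phi(x),x,y)=\Phi(x)(1-F(x)\cdot F(y))+\langle F(y),\nu(x)\rangle$, and then to simplify using the structure equations of the surface together with the geometric facts established above (that $\partial B$ and $F(\Sigma)$ are tangent to second order in a reflected sense at $\bar x,\bar y$). First I would record the first derivatives: $\partial_{x^i}Z = \nabla_i\Phi\,(1-F(\bar x)\cdot F(\bar y)) - \Phi\,F_i(\bar x)\cdot F(\bar y) + \langle F(\bar y),\nabla_i\nu(\bar x)\rangle$ and $\partial_{y^j}Z = -\Phi\,F(\bar x)\cdot F_j(\bar y) + \langle F_j(\bar y),\nu(\bar x)\rangle$, where I use $\nabla_i\nu = -h_{ik}F_k$ (the Weingarten relation, valid in $S^{n+1}$ for the ambient-$\RR^{n+2}$ derivative up to a normal-to-$S^{n+1}$ term that is killed by pairing with the tangential $F(\bar y)$-components, since $F(\bar y)$ differs from a vector in $T_{\bar x}S^3$ only along $F(\bar x)$ and $\nu(\bar x)$). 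Setting these to zero recovers $Z(\bar x,\bar y)=0$, equation \eqref{eq:nuy}, and the reflection relation \eqref{eq:reflect}; I will use these freely when evaluating the second derivatives.

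Next I would expand $\sum_i(\partial_{x^i}+\partial_{y^i})^2 Z = \sum_i(\partial_{x^i}^2 Z + 2\partial_{x^i}\partial_{y^i}Z + \partial_{y^i}^2 Z)$ term by term. The $\partial_{x^i}^2$ piece produces: a $\Delta\Phi\cdot(1-F(\bar x)\cdot F(\bar y)) = \frac{d^2}{2}\Delta\Phi$ term (using $1-F(\bar x)\cdot F(\bar y)=\frac{d^2}{2}$ from the definition of $d$ and $Z=0$); cross terms $\nabla_i\Phi$ paired with $F_i(\bar x)\cdot F(\bar y)$ and with $\nabla_i\nu$; and genuine second-derivative terms involving $F_{ii}(\bar x)$, $\nabla_i\nabla_i\nu(\bar x)$, and $F_i(\bar x)\cdot F_i(\bar y)$. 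Here I substitute $F_{ii}(\bar x) = -g_{ii}F(\bar x) + h_{ii}\nu(\bar x)$ (Gauss formula in $S^3\subset\RR^4$, with the $-g_{ii}F$ coming from the sphere's curvature) and $\nabla_i\nabla_i\nu(\bar x) = -(\nabla_i h_{ii})F_k(\bar x)\cdot(\ldots) - h_{ii}^2\nu(\bar x) + \ldots$; the Codazzi equation makes the $\nabla h$ terms symmetric, and for a CMC surface $\sum_i\nabla_i h_{ii} = \sum_i\nabla_i h_{ki} = \nabla_k(2H) = 0$, which is what eventually kills the terms linear in $\nabla h$. The $\partial_{y^i}^2$ piece is analogous but simpler (no $\Phi$-derivatives), and the mixed term $\partial_{x^i}\partial_{y^i}$ couples $F_i(\bar x)$ with $F_j(\bar y)$ and $\nabla_i\nu(\bar x)$ with $F_j(\bar y)$. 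The crucial algebraic input is the reflection relation \eqref{eq:reflect}: it lets me compute $F_i(\bar x)\cdot F_j(\bar y)$, $F_i(\bar x)\cdot\nu(\bar y)$, etc., explicitly in terms of $\vec\ell$ and $d$, and one finds $F_i(\bar x)\cdot F_j(\bar y) = \delta_{ij} - 2(\vec\ell\cdot F_i(\bar x))(\vec\ell\cdot F_j(\bar x))$, with $\vec\ell\cdot F_i(\bar x)$ controlled by the first-order conditions.

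After assembling everything, the constant-ambient-curvature terms contribute the $+2H$ and the $-2\Phi$ inside $(|A|^2-2-2H\Phi)\Phi$, the $|A|^2$ comes from $\sum_i h_{ii}^2 + (\text{reflected contributions})$, the $-2H\Phi\cdot\Phi$ arises from cross terms between $\Phi\nu$ and the reflected normal, and the delicate part is showing that the terms involving $\nabla_i\Phi$ organize precisely into $-\sum_i\frac{2|\nabla_i\Phi|^2}{\Phi-\kappa_i}$. This last point is where the hypothesis $\Phi(\bar x)>\kappa_i(\bar x)$ enters: the coefficient of $|\nabla_i\Phi|^2$ that emerges naturally is something like $\frac{2}{\Phi-\lambda_i}$, and one must check that no denominator vanishes and that the $\nabla_i\Phi$ cross-terms with first-order data (which are themselves $O(\nabla\Phi)$ after using the vanishing differential) combine correctly — this is the main obstacle and the part requiring the most careful bookkeeping, since it relies on solving the first-order conditions for $\vec\ell\cdot F_i(\bar x)$ in terms of $\nabla_i\Phi$ and $\mu_i$. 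Everything else is a routine (if lengthy) substitution of the Gauss--Codazzi equations and the reflection identities; I would present the computation by first reducing to an expression in the dot products $F(\bar y)\cdot\{F(\bar x),F_i(\bar x),\nu(\bar x)\}$ and their $y$-analogues, then evaluating each such dot product via \eqref{eq:nuy}--\eqref{eq:reflect}, and finally collecting terms.
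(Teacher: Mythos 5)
Your plan is essentially the same computation the paper carries out: differentiate $Z$ twice along $\partial_{x^i}+\partial_{y^i}$, feed in the vanishing first-order conditions (which give $\vec\ell\cdot F_i(\bar x)=\tfrac{d}{2}\nabla_i\Phi/(\Phi-\kappa_i)$), use the Gauss and Weingarten formulas in $S^3\subset\RR^4$, the Codazzi identity with $\nabla H=0$, and the reflection identities for the dot products, then collect terms; the only organizational difference is that you split the operator as $\partial_{x}^2+2\partial_x\partial_y+\partial_y^2$ whereas the paper differentiates $(\partial_{x^i}+\partial_{y^i})Z$ directly, which is cosmetic. One small imprecision worth noting: setting the first derivatives to zero does not "recover" $Z(\bar x,\bar y)=0$ — that is a hypothesis, and together with the first-order conditions it yields \eqref{eq:nuy}, while \eqref{eq:reflect} is a coordinate choice rather than a consequence.
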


\begin{proof}
We first compute the derivatives of $Z$:
\begin{align}\label{eq:Zx}
\frac{\partial Z}{\partial x^i}&=\frac{d^2}2\frac{\partial\Phi}{\partial x^2}-\Phi d\vec{\ell}\cdot\frac{\partial F}{\partial x^i}+d\vec{\ell}\cdot h_i^p(x)\frac{\partial F}{\partial x^i}\notag\\
&=\frac{d^2}{2}\left(\frac{\partial\Phi}{\partial x^i}-\frac{2}{d}\left(\Phi\delta_i^p-h_i^p(x)\right)\vec{\ell}\cdot\frac{\partial F}{\partial x^i}\right).
\end{align}
In particular this vanishes at the point $(\bar{x},\bar{y})$, so we have
\begin{equation}\label{eq:Zx-identity}
\vec{\ell}\cdot\frac{\partial F}{\partial x^i}(\bar{x}) = \frac{d}{2}\frac{\nabla_i\Phi}{\Phi-\kappa_i}\big|_{\bar{x}}.
\end{equation}
Now we begin computing $\sum_i\left(\frac{\partial}{\partial x^i}+\frac{\partial}{\partial y^i}\right)^2Z$:  Differentiating the expression \eqref{eq:defZ1} in the direction $\frac{\partial}{\partial x^i}+\frac{\partial}{\partial y^i}$ we obtain the following:
\begin{equation}\label{eq:ZxZy}
\left(\frac{\partial}{\partial x^i}+\frac{\partial}{\partial y^i}\right)Z
=\frac{d^2}{2}\nabla_i\Phi+\left(\Phi d\vec{\ell}+\nu(x)\right)\cdot\left(\frac{\partial F}{\partial y^i}-\frac{\partial F}{\partial x^i}\right)+h_i^p(x)d\vec{\ell}\cdot\frac{\partial F}{\partial x^p}.
\end{equation}
Differentiating again and taking a sum over $i$, we obtain the following:
\begin{align}\label{eq:D2Z1}
\sum_i\left(\frac{\partial}{\partial x^i}+\frac{\partial}{\partial y^i}\right)^2Z
&=\frac{d^2}{2}\Delta\Phi+2d\nabla_i\Phi\vec{\ell}\cdot\left(\frac{\partial F}{\partial y^i}-\frac{\partial F}{\partial x^i}\right)+\Phi\left|\frac{\partial F}{\partial y^i}-\frac{\partial F}{\partial x^i}\right|^2\notag\\
&\quad\null+2h_i^p(x)\left(\frac{\partial F}{\partial y^i}-\frac{\partial F}{\partial x^i}\right)
\cdot \frac{\partial F}{\partial x^p}\\
&\quad\null+2d\vec{\ell}\cdot\nabla H(x)+h_i^pd\vec{\ell}\cdot(-h_{pi}\nu(x)-g_{ip}F(x))
\notag\\
&\quad\null+2(\Phi d\vec{\ell}+\nu(x))\!\cdot\!\left(\!H(x)\nu(x)\!+\!F(x)\!-\!H(y)\nu(y)\!-\!F(y)\!\right),\notag
\end{align}
where we used the Codazzi identity to write $\nabla_ih_i^p = \nabla^ph_{ii} = 2\nabla^pH$.
The choice \eqref{eq:reflect} and the identity \eqref{eq:Zx-identity} imply that at $(\bar{x},\bar{y})$ we have
\begin{equation}\label{eq:reflect2}
\frac{\partial F}{\partial y^i}-\frac{\partial F}{\partial x^i}=-2\vec{\ell}\cdot\frac{\partial F}{\partial x^i}\vec{\ell} = -\frac{\nabla_i\Phi}{\Phi-\kappa_i}d\vec{\ell}.
\end{equation}
The constant mean curvature condition gives $\nabla H=0$, and we also use the identities $d\vec{\ell}\cdot\nu(x) = -\frac{d^2}{2}\Phi$ and $d\vec{\ell}\cdot F(x) = -\frac{d^2}{2}$ and equations \eqref{eq:nuy} and \eqref{eq:reflect} to obtain the following by evaluating \eqref{eq:D2Z1} at $(\bar{x},\bar{y})$:
\begin{align}\label{eq:D2Z2}
\sum_i\left(\frac{\partial}{\partial x^i}+\frac{\partial}{\partial y^i}\right)^2Z
&=\frac{d^2}{2}\left(\Delta\Phi-4\frac{|\nabla_i\Phi|^2}{\Phi-\kappa_i}+2\Phi\frac{|\nabla_i\Phi|^2}{(\Phi-\kappa_i)^2}-2\kappa_i\frac{|\nabla_i\Phi|^2}{(\Phi-\kappa_i)^2}\right.\notag\\
&\quad\quad\quad\null+|A|^2\Phi+2H-2H\Phi^2-2\Phi\Big).
\end{align}
The result follows directly.
\end{proof}

\begin{corollary}
\label{key.ingredient}
At the point $(\bar{x},\bar{y})$ we have
\begin{equation*}
\sum\limits_{i=1}^2\!\left(\!\frac{\partial}{\partial x^i}\!+\!\frac{\partial}{\partial y^i}\!\right)^2\!\!\!Z
\leq \frac{d^2}{2}\left(\Delta\Phi-\frac{|\nabla \Phi|^2}{\Phi-H}+\left(|A|^2-2-2H\Phi\right)\Phi+2H\right)\Big|_{(\bar{x},\bar{y})}.
\end{equation*}

\begin{proof}
We have $\Phi-\kappa_2 = \Phi-(2H-\kappa_1)\leq \Phi+\kappa_1-2H\leq 2(\Phi-H)$.
\end{proof}
\end{corollary}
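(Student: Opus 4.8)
The plan is to read Corollary~\ref{key.ingredient} off Proposition~\ref{prop:D2Z} by trading the sum $\sum_i\frac{2|\nabla_i\Phi|^2}{\Phi-\kappa_i}$ for the single term $\frac{|\nabla\Phi|^2}{\Phi-H}=\frac{\sum_i|\nabla_i\Phi|^2}{\Phi-H}$. Since both quantities appear with a minus sign in the respective formulas, and every other term on the two right-hand sides is identical (all of $\Delta\Phi$, $\nabla\Phi$, $|A|^2$, $H$, $\Phi$ being functions on $\Sigma$ evaluated at $\bar x$, so that the decoration $(\bar x,\bar y)$ in the Corollary is purely cosmetic), it is enough to prove the pointwise bound
\[
\frac{2|\nabla_i\Phi|^2}{\Phi-\kappa_i}\ \ge\ \frac{|\nabla_i\Phi|^2}{\Phi-H}\qquad(i=1,2)
\]
at $\bar x$, then sum over $i$ and substitute into the formula of Proposition~\ref{prop:D2Z}.

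Under the hypothesis $\Phi>\kappa_i$ of Proposition~\ref{prop:D2Z} (together with $\Phi>H$) all denominators are positive, so the displayed inequality is equivalent to $\Phi-\kappa_i\le 2(\Phi-H)$, that is, to $2H-\kappa_i\le\Phi$. By the constant mean curvature relation $\kappa_1+\kappa_2=2H$ this is just $\kappa_{3-i}\le\Phi$, which is precisely the standing hypothesis of Proposition~\ref{prop:D2Z} with the roles of the two indices interchanged; concretely, for $i=2$ one writes $\Phi-\kappa_2=\Phi-(2H-\kappa_1)=\Phi+\kappa_1-2H\le 2(\Phi-H)$ using $\kappa_1\le\Phi$. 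Hence both cases hold and the Corollary follows.

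There is essentially nothing hard here, but it is worth isolating where each case draws its estimate. For $i=1$ the requirement $\kappa_2\le\Phi$ is automatic from $\Phi>H>\lambda_2=\kappa_2$, so this term uses only $\Phi>H$; for $i=2$ the requirement is $\kappa_1\le\Phi$, and here one genuinely needs $\Phi$ to dominate the \emph{maximum} principal curvature $\lambda_1=\kappa_1$ rather than merely $H$. This is why the estimate closes even though the term $\frac{2|\nabla_2\Phi|^2}{\Phi-\kappa_2}$, taken in isolation, need not be comparable to $\frac{|\nabla_2\Phi|^2}{\Phi-H}$ unless $\Phi\ge\kappa_1$. In the application of Section~\ref{sec:MP} this costs nothing, since the interior ball curvature always satisfies $\bar\Phi\ge\lambda$ (as noted just after Proposition~\ref{prop:balls}), so the hypothesis $\Phi>\kappa_i$ is available precisely when it is needed.
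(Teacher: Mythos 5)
Your proposal is correct and follows essentially the same approach as the paper: the key estimate $\Phi-\kappa_i\le 2(\Phi-H)$ for each $i$, derived from $\kappa_1+\kappa_2=2H$ and $\Phi>\kappa_j$, is exactly the paper's one-line proof (which records only the nontrivial case $i=2$). Your additional remarks correctly locate where each case draws on the standing hypothesis $\Phi>\kappa_i$.
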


Now we can prove the main result of this section:

\begin{theorem}\label{thm:MP}
Suppose that $F: \Sigma \to S^3$ is an embedded torus with constant mean curvature $H\geq 0$ in $S^3$. Then the interior ball curvature $\bar\Phi$ of $F(\Sigma)$ is equal to the maximum principal curvature $\lambda_1$ at every point.
\end{theorem}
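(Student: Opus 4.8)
My plan is to establish the reverse inequality $\bar\Phi\le\lambda_1$, since $\bar\Phi\ge\lambda_1$ holds automatically (a ball whose boundary curvature is below the largest principal curvature cannot touch from inside). First I would record that $\Sigma$ has no umbilic points: the Hopf differential of a CMC surface is a holomorphic quadratic differential, and on a torus such a differential is either identically zero or nowhere zero, the first case being excluded because a torus is not totally umbilic in $S^3$. Hence $\lambda_1>H>\lambda_2$ everywhere, so $\lambda_1$ is a smooth function and a local principal frame exists. I would also note that $\bar\Phi$ is bounded on the compact surface $\Sigma$ (tubular neighbourhood theorem), so $\sup_\Sigma(\bar\Phi/\lambda_1)$ is finite.

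The crux is a Simons-type identity for $\lambda_1$. Using the Codazzi equations in $S^3$ in a local principal frame, together with $\lambda_1+\lambda_2=2H=\mathrm{const}$, one computes $\Delta\lambda_1=\frac{|\nabla\lambda_1|^2}{\lambda_1-H}+2(\lambda_1-H)(1+\lambda_1\lambda_2)$, where by the Gauss equation $1+\lambda_1\lambda_2$ is the Gauss curvature. Substituting $\Phi=\lambda_1$ into the right-hand side of Corollary~\ref{key.ingredient} and using $|A|^2=\lambda_1^2+\lambda_2^2$ and $\lambda_2=2H-\lambda_1$, the gradient terms cancel and the remaining polynomial in $\lambda_1$ and $H$ vanishes identically: so $\lambda_1$ is an exact critical comparison function for the operator appearing in Corollary~\ref{key.ingredient}. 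A further short computation (differentiating this identity with respect to the comparison function in the direction $\mu_1=\lambda_1-H$) shows that $\lambda_1+\varepsilon(\lambda_1-H)$ makes that expression equal to $-4\varepsilon H(\lambda_1-H)^2+O(\varepsilon^2)$; in particular it is a strict subsolution when $H>0$.

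With these in hand I would argue by contradiction via a continuity/comparison argument. Assume $\bar\Phi>\lambda_1$ somewhere. Starting from a comparison function $\Phi$ for which $Z(\Phi(x),x,y)\ge0$ clearly holds (e.g.\ a large constant, since small interior balls always touch an embedded surface), one deforms $\Phi$ downward toward $\lambda_1$ (for $H>0$, through the subsolutions $\lambda_1+\varepsilon(\lambda_1-H)$, then letting $\varepsilon\to0$) and isolates the first parameter value at which $Z$ develops a zero, at a pair $(\bar x,\bar y)$. Since the touching ball at $\bar x$ then has boundary curvature strictly larger than every principal curvature at $\bar x$, the ball is strictly interior near $\bar x$, so $\bar y\ne\bar x$ and $(\bar x,\bar y)$ is an interior minimum of $Z$. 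Then $dZ=0$ and the Hessian is positive semidefinite, so $\sum_i(\partial_{x^i}+\partial_{y^i})^2Z\ge0$ there; feeding Proposition~\ref{prop:D2Z} and the Simons identity into this, and using the first-order identity \eqref{eq:Zx-identity} to control $\nabla_1\lambda_1$ at $\bar x$, one reaches a contradiction from the strict subsolution sign when $H>0$, concluding $\bar\Phi\le\lambda_1$. When $H=0$ the perturbation degenerates (the Lawson case), and I would instead follow \cite{Br}: the differential inequality for $Z$ lets a strong maximum principle propagate the vanishing of $Z$ over the connected manifold $\Sigma\times\Sigma$, forcing $Z\equiv0$, whence $F(\Sigma)$ would lie on the boundary $2$-sphere of a single ball — impossible for a torus. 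Combined with $\bar\Phi\ge\lambda_1$, this gives $\bar\Phi=\lambda_1$ everywhere.

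The main obstacle is precisely this equality case: because $\lambda_1$ is an honest critical point of the relevant operator, the naive comparison yields only equality, not a strict inequality, so the contradiction must be wrung out of finer structure — the sharp sign of the subsolution inequality when $H>0$, and a strong maximum principle (with careful treatment of the degenerate ellipticity of $Z$ on $\Sigma\times\Sigma$ and of possible tangential zeros near the diagonal) when $H=0$. A secondary technical point is to organize the continuity argument so that only the smooth functions $\lambda_1$ and its perturbations enter the differential inequalities, sidestepping any a priori regularity question for $\bar\Phi$ itself.
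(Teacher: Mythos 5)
Your proposal is essentially the paper's proof, reparametrized: your comparison function $\lambda_1+\varepsilon(\lambda_1-H)$ is the paper's $\Phi=\kappa\mu+H$ with $\kappa=1+\varepsilon$, your Simons-type identity and the exact cancellation at $\varepsilon=0$ reproduce Proposition~\ref{simons.identity}, and your first-zero continuity argument is the paper's $\bar\kappa=\inf\{\kappa>0:\ Z(\kappa\mu+H,\cdot,\cdot)\geq 0\}$ combined with the strict inequality \eqref{eq:KIwithPhi} when $\bar\kappa>1$. Both proofs defer the case $H=0$ to \cite{Br}; a small inaccuracy is that your sketch of that case is not quite what Brendle does (his strong maximum principle forces $\nabla|A|\equiv 0$, hence $|A|$ constant and the surface Clifford, rather than $Z\equiv 0$ on all of $\Sigma\times\Sigma$), but since you correctly cite \cite{Br} this does not affect the soundness of your plan.
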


\begin{proof}
The case $H=0$ was proved in \cite{Br}, so we assume that $H>0$.
We will apply the maximum principle using the formula in Proposition \ref{key.ingredient}, with $\Phi$ chosen as follows:  We denote by $\lambda(x)=\lambda_1(x)$ the largest principal curvature at $x$, and by $\mu=\mu_1$ the difference $\lambda-H$.  Then we choose
$$
\Phi(x) =\kappa \mu +H
$$
where $\kappa$ is a positive constant.  We require the following variant of Simons' identity:

\begin{proposition}
\label{simons.identity}
Suppose that $F: \Sigma \to S^3$ is an embedded CMC torus in $S^3$. Then the function $\mu$ is strictly positive and satisfies the partial differential equation
$$
\Delta\mu- \frac{|\nabla \mu|^2}{\mu} + 2(\mu^2 - 1-H^2)\mu= 0.
$$
\end{proposition}

\begin{proof}
Note that $|\AA|^2 = 2\mu^2$, so $\mu$ vanishes only at umbilical points.
It follows from work of Hopf that a CMC torus in $S^3$ has no umbilical points (see \cite{Ho2} or \cite{Ch}), so $\mu$ is strictly positive and smooth everywhere.
Using the Simons identity (cf. \cite{Si}, \cite{NS}, \cite{Ya}), we have  the known result (for example, see (2.7) and (2.8) in \cite{Li})
\begin{align*}
\frac{1}{2}\Delta(|{\AA}|^2)
& =\frac{|\nabla|{\AA}|^2|^2}{2|{\AA}|^2}-|{\AA}|^4+2|{\AA}|^2+2H^2|{\AA}|^2 \\
&=2|\nabla|{\AA}||^2-|{\AA}|^4+2|{\AA}|^2+2H^2|{\AA}|^2,
\end{align*}
which is equivalent to the proposition 8.
\end{proof}

Since $\Sigma$ is compact and $\mu$ is positive, $\mu$ has a positive lower bound.  Since $F(\Sigma)$ is embedded, the interior ball curvature $\bar\Phi$ is bounded above.  Therefore for sufficiently large $\kappa$ we have $\kappa\mu+H>\bar\Phi$ and
the function $Z$ is non-negative.

Along any geodesic in $\Sigma$ through $x$ we have $\frac{2\langle d{\vec{\ell}},\nu(x)\rangle}{d^2} = -h_x(\gamma',\gamma')+O(s)$, and hence $Z(\kappa\mu+H,x,\gamma(s)) = \frac{1}{2}[\kappa\mu+H-h_x(\gamma',\gamma')]s^2+O(s^3)$.

In particular, if $\kappa<1$ then we can choose $\gamma'(0)$ to be in the direction of the largest principal curvature, so that $h_x(\gamma',\gamma')=\lambda=H+\mu$.  Then $Z(\kappa\mu+H,x,\gamma(s)) = \frac{1}{2}(\kappa-1)\mu s^2+O(s^3)$, so that $Z$ takes negative values.

On the other hand if $\kappa>1$ then in every direction from $x$ we have $Z(\kappa\mu+H,x,\gamma(s))\geq \frac{1}{2}(\kappa-1)\mu s^2+O(s^3)$, and it follows that $Z$ is positive in a neighbourhood of the diagonal $\{(x,x):\ x\in\Sigma\}$ in $\Sigma\times\Sigma$.

We choose $\bar\kappa = \inf\{\kappa>0:\ Z(\kappa\mu+H,x,y)\geq 0\text{\rm\ for\ all\ }x,y\in\Sigma\}$.  The considerations above then show that $1\leq\bar\kappa<\infty$.  Note that if $\bar\kappa=1$ then the theorem is proved, since then we have $\bar\Phi\leq\mu+H=\lambda$ and hence $\bar\Phi=\lambda$ as claimed.  We complete the proof of the theorem by assuming that $\bar\kappa>1$ and deriving a contradiction:

If $\bar\kappa>1$, then since $Z$ is positive in a neighbourhood of the diagonal in $\Sigma\times\Sigma$, there must exist a point $(\bar{x},\bar{y})$ in $\Sigma\times\Sigma$ with $\bar{x}\neq\bar{y}$ such that $Z(\bar{x},\bar{y})=0$, while $Z(x,y)\geq 0$ for every $(x,y)\in\Sigma\times\Sigma$.  Also, we have $\Phi>\kappa_i$ for all $i$, so Proposition \ref{prop:D2Z} applies.  Since this is a minimum point of $Z$, the second derivatives are non-negative.  However, using Proposition \ref{simons.identity}, the identity of Corollary \ref{key.ingredient} becomes the following:
\begin{equation}\label{eq:KIwithPhi}
\sum\limits_{i=1}^2\left(\frac{\partial}{\partial x_i}+\!\frac{\partial}{\partial y^i}\right)^2Z\big|_{(\bar{x},\bar{y})}\leq
-(\kappa^2-1)d^2\mu^2H<0,
\end{equation}
and we have a contradiction since the left hand side of \eqref{eq:KIwithPhi} is non-negative while the right-hand side is strictly negative.
\end{proof}

\section{Rotational symmetry}\label{sec:symmetry}

Now we prove that embedded CMC tori in $S^3$ have rotational symmetry:

\begin{theorem}\label{thm:symmetry}
Let $F:\ \Sigma\to S^3$ be a CMC embedding for which $Z(\lambda(x),x,y)\geq 0$ for every $x,y\in\Sigma$ (equivalently, $\bar\Phi(x)=\lambda(x)$ everywhere).  Then $\Sigma$ is rotationally symmetric.
\end{theorem}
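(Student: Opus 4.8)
The plan is to study the zero set of $W(x,y):=Z(\lambda(x),x,y)$ on $\Sigma\times\Sigma$. By hypothesis $W\geq 0$, and $W$ vanishes on the diagonal $D=\{(x,x)\}$ (since $F(x)\cdot F(x)=1$ and $\langle F(x),\nu(x)\rangle=0$). Geometrically, $W\geq 0$ says that the ball of boundary curvature $\lambda(x)$ touching $F(\Sigma)$ at $F(x)$ lies in $\Omega$, and it is tangent there to second order in the direction $e_1$ of the largest principal curvature $\lambda=\lambda_1$. First I would analyse $W$ near $D$: a second-order Taylor expansion at $(x_0,x_0)$, based on the formula $Z(\lambda(x),x,\gamma(s))=\tfrac12(\lambda-h_x(\gamma',\gamma'))s^2+O(s^3)$ from the proof of Theorem~\ref{thm:MP}, shows that $\mathrm{Hess}\,W$ at $(x_0,x_0)$ is positive semi-definite of rank one, vanishing exactly on the three-dimensional space of directions $(a,b)\in(T_{x_0}\Sigma)^2$ with $\langle a-b,e_2\rangle=0$; in particular it vanishes in the direction $(0,e_1)$ along which $y$ slides away from $x=x_0$ in the maximal principal direction. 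Since $W\geq 0$, the cubic term of $W$ in that direction vanishes too.

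The heart of the proof is then a strong maximum principle for degenerate elliptic operators, of the type introduced by Bony and used by Brendle in \cite{Br}. The decisive computational point is that for $\Phi=\lambda$ the right-hand side of Corollary~\ref{key.ingredient} vanishes identically: writing $\mu=\lambda-H$ and using $|A|^2=2H^2+2\mu^2$, that right-hand side collapses to $\tfrac{d^2}{2}\bigl(\Delta\mu-|\nabla\mu|^2/\mu+2\mu(\mu^2-1-H^2)\bigr)$, which is zero by Proposition~\ref{simons.identity}. Thus $Z$ is, in the degenerate-elliptic sense, a supersolution of the operator $\sum_i\bigl(\frac{\partial}{\partial x^i}+\frac{\partial}{\partial y^i}\bigr)^2$ plus the relevant drift and zeroth-order terms. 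One must be careful here: the hypothesis $\Phi(\bar x)>\kappa_i(\bar x)$ of Proposition~\ref{prop:D2Z} fails exactly in this limiting case $\Phi=\kappa_1$, so the term $\nabla_1\Phi/(\Phi-\kappa_1)$ looks singular; but comparing \eqref{eq:Zx-identity} with the obvious boundedness of $\vec{\ell}\cdot\frac{\partial F}{\partial x^1}$ shows that at any off-diagonal zero of $W$ one necessarily has $\nabla_1\lambda=0$, so the apparent singularity cancels and the computation goes through. Bony's propagation theorem then forces the zero set of $W$ to contain, for every $x_0$, the set $\{(x_0,\gamma(t)):t\in\mathbb{R}\}$, where $\gamma$ is the integral curve through $x_0$ of the maximal-principal-direction line field (smooth, since a CMC torus has no umbilics by Hopf's theorem). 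Equivalently, the ball of boundary curvature $\lambda(x_0)$ touching $\Sigma$ at $F(x_0)$ is tangent to $F(\Sigma)$ along the whole image of $\gamma$; compactness makes $\gamma$ closed, so $\Sigma$ is foliated by closed ``$e_1$-orbits''.

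Next I would read off the differential consequences. If $x$ lies on the $e_1$-orbit $\Gamma$ through $x_0$, the ball of curvature $\lambda(x_0)$ lies in $\Omega$ and touches $\Sigma$ at $x$, so $\lambda(x)=\bar\Phi(x)\leq\lambda(x_0)$; since $\Gamma$ is equally the $e_1$-orbit through $x$, the reverse inequality holds too, so $\lambda$ (hence $\mu$ and $|A|^2$) is constant along every $e_1$-orbit, i.e. $e_1(\lambda)=0$, and likewise $e_1(\lambda_2)=0$ because $H$ is constant. The Codazzi equations in the principal frame then force the connection coefficient $\langle\nabla_{e_2}e_1,e_2\rangle$ to vanish, so each $e_2$-orbit is a geodesic carrying a parallel principal frame. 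In semi-geodesic coordinates $(u,v)$ adapted to this ($\frac{\partial}{\partial v}=e_2$ unit-speed geodesic, $v$ arclength, $u$ arclength on a fixed $e_1$-orbit) the metric is $g=\varphi(u,v)^2\,du^2+dv^2$, $e_1=\varphi^{-1}\frac{\partial}{\partial u}$, and $\lambda_i=\lambda_i(v)$; the remaining Codazzi equation reads $\lambda_1'(v)=(\varphi_v/\varphi)(\lambda_2(v)-\lambda_1(v))$, so $\partial_v\log\varphi$ depends on $v$ alone, which together with $\varphi(\cdot,v_0)\equiv 1$ forces $\varphi=\varphi(v)$. Hence $g=\varphi(v)^2\,du^2+dv^2$ is a metric of revolution with Killing field $\frac{\partial}{\partial u}$, and each $e_1$-orbit $\Gamma_v$ has constant geodesic curvature $\varphi_v/\varphi$; as $\Gamma_v$ lies on the round two-sphere tangent to $\Sigma$ along it, $\Gamma_v$ is a round circle.

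Finally I would upgrade this intrinsic symmetry to an ambient one. The flow of $\frac{\partial}{\partial u}$ is an isometry of $\Sigma$ preserving the ($\frac{\partial}{\partial u}$-invariant) second fundamental form, so by the rigidity part of the fundamental theorem of hypersurfaces it extends to a closed one-parameter subgroup of $SO(4)$ preserving $F(\Sigma)$; since its orbits on $F(\Sigma)$ are round circles, the subgroup must be the group of rotations fixing a two-plane $\Pi\subset\mathbb{R}^4$. The step I expect to be the main obstacle is the degenerate-elliptic maximum principle: one must check that $Z$ is a bona fide supersolution of a degenerate elliptic operator whose degeneracy matches the null directions of $\mathrm{Hess}\,W$ on the diagonal --- the delicate point being exactly that Proposition~\ref{prop:D2Z} breaks down when $\Phi=\kappa_1$ --- and then verify that Bony's propagation moves the zero set along the $e_1$-line field and not merely within the diagonal. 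Everything after the ``tangency along $e_1$-orbits'' conclusion is routine, if somewhat lengthy, analysis of the structure equations.
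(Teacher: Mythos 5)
Your route to the central differential identity is considerably heavier than the paper's, and the step you yourself flag as the ``main obstacle'' --- running a Bony-type degenerate-elliptic propagation argument for $W(x,y)=Z(\lambda(x),x,y)$ --- is not resolved as written, and in fact is unnecessary. The issue is genuine: Proposition~\ref{prop:D2Z} requires $\Phi>\kappa_i$, which fails identically for $\Phi=\lambda=\kappa_1$, and observing that $\nabla_1\lambda=0$ at a touching point does not by itself validate the limiting form of the second-derivative identity, nor does it identify the propagation set of the degenerate operator (that requires computing brackets of the null directions with the drift terms in \eqref{eq:D2Z1}, which you have not done). The paper sidesteps all of this with a purely local, third-order Taylor argument: for fixed $x$ set $f(t)=Z(\lambda(x),x,\exp_x(te_1(x)))$; then $f\geq 0$, $f(0)=f'(0)=f''(0)=0$, hence $f'''(0)=0$, and an explicit computation \eqref{eq:3.6} gives $f'''(0)=-(\nabla_{e_1}h)(e_1,e_1)\big|_x$. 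This yields $(\nabla_{e_1}h)(e_1,e_1)\equiv 0$ at every point, with no maximum principle and no off-diagonal zero required. You actually contain this observation in disguise (``the cubic term of $W$ in the direction $(0,e_1)$ vanishes too''), but you never compute what that cubic term is; once you do, the Bony machinery evaporates.

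After $(\nabla_{e_1}h)(e_1,e_1)=0$ is in hand, your structure-equation analysis (Codazzi forces the $e_2$-flow lines to be geodesics; semi-geodesic coordinates give a warped-product metric) runs parallel to the paper's \eqref{eq:3.9}--\eqref{eq:3.16}. But your final step has a further small gap: from the fundamental theorem of hypersurfaces you get a one-parameter subgroup of $SO(4)$ preserving $F(\Sigma)$, and you claim it must fix a two-plane because its orbits are round circles. That inference is false in general --- a Clifford translation (simultaneous equal-angle rotation in two orthogonal planes) also has round-circle orbits, namely great circles, and fixes no two-plane. To rule it out you need to know the $e_1$-orbits are \emph{small} circles, i.e.\ $r<1$; this does follow from $(r')^2+r^2(1+\lambda_1^2)=1$ together with $\lambda_1>H\geq 0$, but you do not say so. The paper avoids the whole ambiguity by proving directly (Lemma~\ref{lem:perp-plane}) that $\Pi^\perp=\operatorname{span}\{e_1,\,\bar\nabla_{e_1}e_1\}$ is a \emph{constant} two-plane in $\RR^4$, and then writing down the rotationally symmetric parametrization explicitly.
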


\begin{proof}
In this case, we have
$$
Z(x,y) = \lambda(x)(1 - \langle F(x),F(y) \rangle) + \langle \nu(x),F(y) \rangle \geq 0
$$
for all points $x,y$. For simplicity, we identify the surface $\Sigma$ with its image under the embedding $F$, so that $F(x) = x$.   Since $\Sigma$ is a CMC torus and therefore has no umbilical points, we have global smooth eigenvector fields $e_1$ and $e_2$ such that $h(e_1,e_1)=\lambda_1=\lambda$ and $h(e_2,e_2)=\lambda_2=2H-\lambda$, and $h(e_1,e_2)=0$.   Exactly as in \cite{Br} we can deduce that $(\nabla_{e_1}h)(e_1,e_1)=0$, and consequently also $(\nabla_{e_1}h)(e_2,e_2)=0$ everywhere on $\Sigma$ (we repeat the argument here for convenience):  For any $x\in M$, let $\gamma(t)$ be the geodesic $\gamma(t)=\exp_x(te_1(x))$ which passes through $x$ in direction $e_1(x)$, and define $f(t)=Z(\lambda(x),x,\gamma(t))$.  Then as in \cite{Br} we have
$$
f(t)=\lambda(1-x\cdot\gamma(t))+\gamma(t)\cdot\nu(x).
$$
In particular $f(0)=0$.  Differentiating $f(t)$ we have
$$
f'(t) = \gamma'(t)\cdot (\nu(x)-\lambda x),
$$
so that $f'(0)=0$ also.  A further differentiation gives
$$
f''(t) = -[h_{\gamma(t)}(\gamma',\gamma')\nu(\gamma(t))+\gamma(t)]\cdot(\nu(x)-\lambda x),
$$
which again vanishes when $t=0$.  Since $f(t)\geq 0$ for all $t$ we must have $f'''(0)=0$:
\begin{equation}\label{eq:3.6}
0=f'''(0) = -((\nabla_{\gamma'}h)(\gamma',\gamma')\nu+h(\gamma',\gamma')D_{\gamma'}\nu+\gamma')\cdot(\nu(x)-\lambda x) = -(\nabla_{e_1}h)(e_1,e_1)\big|_x.
\end{equation}
Note that $e_i\lambda_1 = (\nabla_{e_i}h)(e_1,e_1)$, so that $e_1\lambda_1=0$ and similarly $e_1\lambda_2=0$ and $e_1\mu=0$, where $\mu=\lambda-H$.

Now we choose a local orthonormal basis $\{e_1,e_2,e_3\}$ with $e_3=\nu$ along $F(\Sigma)$, and let $\{\omega_1,\omega_2\}$ be the dual coframe of $\{e_1,e_2\}$.
We recall that the Levi-Civita connection $\omega_{12}$ is defined by
$$
de_1=\omega_{12}e_2,\,\,\,\,\,
de_2=\omega_{21}e_1, \;\;\; \omega_{12}=-\omega_{21}.
$$
We have $h_{11}=\lambda_1=\lambda=H+\mu, h_{12}=0, h_{22}=\lambda_2=H-\mu$, $\mu\neq 0$.
The derivatives of the components of the second fundamental form are defined by
\begin{equation}\label{eq:dhij}
h_{ijk}\omega_k=dh_{ij}+h_{kj}\omega_{ki}+h_{ik}\omega_{kj}.
\end{equation}
The Codazzi equations give
\begin{equation}\label{eq:codazzi}
h_{ijk}=h_{ikj}, \;\; 1\leq i,j,k\leq 2.
\end{equation}
Choosing $i=1$ and $j=2$ in \eqref{eq:dhij}, we have
\begin{equation}\label{eq:3.9}
h_{122}=(\lambda_1-\lambda_2)\omega_{12}(e_2)=2\mu\omega_{12}(e_2), \;\; h_{121}=2\mu\omega_{12}(e_1).
\end{equation}
From the equation $H={\rm constant}$, and equations \eqref{eq:codazzi}, \eqref{eq:3.9} and \eqref{eq:3.6}, we have
\begin{equation}\label{eq:3.10}
\omega_{12}(e_2)=0, \;\; 2\mu\omega_{12}(e_1)=e_2(\lambda)=e_2(\mu).
\end{equation}
Thus we have
\begin{equation}\label{eq:3.11}
\nabla_{e_1}e_1=\omega_{12}(e_1)e_2=\frac{e_2(\mu)}{2\mu}e_2,\quad
\nabla_{e_2}e_1=\omega_{12}(e_2)e_2=0.
\end{equation}

\begin{equation}\label{eq:3.12}
\nabla_{e_2}e_2=\omega_{21}(e_2)e_1=0,\quad
\nabla_{e_1}e_2=\omega_{21}(e_1)e_1=-\frac{e_2(\mu)}{2\mu}e_1.
\end{equation}
It follows from  \eqref{eq:3.12} that the flow lines of $e_2$ are geodesic in $\Sigma$.

We have the following calculations using \eqref{eq:3.11} and \eqref{eq:3.12}:
\begin{align}
1+\lambda_1\lambda_2 &=R_{1212}\notag\\
&=<\nabla_{e_1}\nabla_{e_2}e_2-\nabla_{e_2}\nabla_{e_1}e_2-\nabla_{[e_1,e_2]}e_2,e_1>\label{eq:3.13}\\
&=
e_2(\frac{1}{2\mu}e_2(\mu))
-(\frac{1}{2\mu}e_2(\mu))^2.\notag
\end{align}
Writing
\begin{equation}\label{eq:3.14}
w={\mu}^{-\frac{1}{2}},
\end{equation}
we have from \eqref{eq:3.13}
\begin{equation}\label{eq:3.15}
\frac{1}{w}e_2(e_2(w))-\frac{1}{w^4}+H^2+1=0.
\end{equation}
Multiplying by $2we_2(w)$, we obtain
\begin{equation}\label{eq:3.16}
[e_2(w)]^2+w^{-2}+(1+H^2)w^2=C_1,
\end{equation}
where $C_1$ is a constant.

Let us denote by $x:M\to S^3\subset R^4$ the position vector, and by $\bar\nabla$ the Euclidean connection on $R^4$. Using the fact that $\bar{\nabla}_{v}x=v$,  $<x,\nu(x)>=0$ and $<\nu(x),\nu(x)>=1$, we get that
$$
\begin{array}{lcl}
\bar{\nabla}_{e_1}e_1&=&\frac{e_2(\mu)}{2\mu}e_2-\lambda_1\nu-x\\
\bar{\nabla}_{e_2}e_2&=&-x-\lambda_2 \nu\\
\bar{\nabla}_{e_2}\nu &=&\lambda_2 e_2\\
\bar{\nabla}_{e_2}x&=&e_2.
\end{array}
$$
Let us fix a point $x_0\in M$, and denote by $\sigma(u)$ the geodesic in $\Sigma$ such that $\sigma(0)=x_0$ and $\sigma^{\prime}(0)=e_2(x_0)$. We write $g(u)=w(\sigma(u))$. Equation \eqref{eq:3.16} implies that
\begin{equation}\label{eq:3.17}
(g^\prime)^2+g^{-2}+(1+H^2)g^2+2H=C
\end{equation}
where $C$ is a constant greater than $2(H+\sqrt{1+H^2})$ and $C=C_1+2H$.
The polynomial
\begin{equation}\label{eq:3.18}
\xi(s)=C s^2-1-(1+H^2)s^4-2Hs^{2}
\end{equation}
is positive on an interval $(t_1,t_2)$ with $0<t_1<t_2$ and $\xi(t_1)=\xi(t_2)=0$.  The roots can be explicitly calculated:
\begin{equation}\label{eq:3.19}
\begin{aligned}
t_1&=\sqrt{\frac{C-2H-\sqrt{C^2-4 H C-4}}{2(1+H^2)}},\\
t_2&=\sqrt{\frac{C-2H+\sqrt{C^2-4 H C-4}}{2(1+H^2)}}.
\end{aligned}
\end{equation}
We have that $g$ is a periodic function with period
$$
T=2 \int_{t_1}^{t_2}\frac{t}{\sqrt{(C-2H) t^2-1-(1+H^2)t^4}}dt.
$$
We can solve $g(u)$ from (25)
$$
g(u)=\sqrt{\frac{C-2H+\sqrt{C^2-4-4HC}{\rm sin}(2\sqrt{1+H^2}u)}{2(1+H^2)}}.
$$
From the expression of $g(u)$, we get that its period $T=\frac{\pi}{\sqrt{1+H^2}}$.

\begin{lemma}\label{lem:commuting}
The vector fields $e_2$ and $\frac{e_1}{\sqrt{\mu}}$ commute.
\end{lemma}

\begin{proof}
We have using \eqref{eq:3.11} and \eqref{eq:3.12}
$$
\left[e_2,\frac{e_1}{\sqrt{\mu}}\right] = \nabla_{e_2}\left(\frac{e_1}{\sqrt{\mu}}\right)-\frac{1}{\sqrt{\mu}}\nabla_{e_1}e_2 = -\frac{e_2\mu}{2\mu^{3/2}}e_1+\frac{1}{\sqrt{\mu}}\frac{e_2\mu}{2\mu}e_1 = 0.
$$
\end{proof}

\begin{lemma}\label{lem:perp-plane}
The plane $\Pi^\perp$ generated by the vectors $e_1 $ and $\bar{\nabla}_{e_1}e_1$
is constant on $\Sigma$.
\end{lemma}

\begin{proof}
We show that the derivatives of the given basis are in $\Pi^\perp$:  Differentiating $e_1$ in the $e_1$ direction we clearly have
$$\bar{\nabla}_{e_1}e_1\in\Pi^\perp.
$$
Noting that $\bar{\nabla}_{e_1}e_1=\frac{e_2\mu}{2\mu}e_2-\lambda_1\nu-x$, we compute
\begin{align*}
\bar{\nabla}_{e_1}\left(\bar{\nabla}_{e_1}e_1\right) &=\bar{\nabla}_{e_1}\left(\frac{e_2\mu}{2\mu}e_2-\lambda_1\nu-x\right)\\
&=\frac{e_2\mu}{2\mu}\bar{\nabla}_{e_1}e_2-\lambda_1^2e_1-e_1\\
&=-\left(\frac{(e_2\mu)^2}{4\mu^2}+1+\lambda^2\right)e_1\in\Pi^\perp.
\end{align*}
Next we check the derivatives in the $e_2$ direction:  We have
$$
\bar{\nabla}_{e_2}e_1 = \nabla_{e_2}e_1-h(e_2,e_1)\nu-g(e_2,e_1)x = 0
$$
by \eqref{eq:3.11}.  Also we have
\begin{align*}
\bar{\nabla}_{e_2}(\bar{\nabla}_{e_1}e_1)
&=\bar{\nabla}_{e_2}\left(\sqrt{\mu}
\bar{\nabla}_{\frac{e_1}{\sqrt{\mu}}}e_1\right)\\
&=\frac{e_2\mu}{2\mu}\bar{\nabla}_{e_1}e_1
+\bar{\nabla}_{e_1}\bar{\nabla}_{e_2}e_1\\
&=\frac{e_2\mu}{2\mu}
\bar{\nabla}_{e_1}e_1\in\Pi^\perp,
\end{align*}
where we used lemma \ref{lem:commuting} in the second line.
It follows that $\Pi^\perp$ is locally constant, hence constant on $\Sigma$.
\end{proof}

We now parametrize $\Sigma$ by two parameters $s$ and $u$ so that $(0,0)$ corresponds to the point $x_0\in \Sigma$, $\frac{\partial x}{\partial u}=E_2=e_2$, and $\frac{\partial x}{\partial s}=E_1=\frac{e_1}{\sqrt{\mu}}$ (this is possible since $E_1$ and $E_2$ commute by lemma \ref{lem:commuting}).

Write
\begin{align*}
   r=\frac{g}{\sqrt{C}}=\frac{\mu^{-\frac 12}}{\sqrt{C}},
\end{align*}
where $C$ is the constant in \eqref{eq:3.17}. Note that $g,r$ only depends on $u$, since $e_1(\mu)=0$. We have $ \displaystyle{\frac{r'}{r}=-\frac{\mu'}{2\mu}}$ (here $r'=\dfrac{dr}{du}$) and
\begin{align}\label{eq:3.22}
\frac{r''}{r}+1+\lambda_1\lambda_2=0,\quad (r')^2+r^2(1+\lambda_1^2)=1.
\end{align}

It follows from lemma \ref{lem:perp-plane} that the two-dimensional plane $\Pi$ perpendicular to $\Pi^\perp$ is also constant.  This is generated by the orthonormal basis $\{p,q\}$ where $p=\frac{\lambda_1x-\nu}{\sqrt{1+\lambda_1^2}}$ and $q=\displaystyle{\frac{r(1+\lambda_1^2)e_2-r'x-\lambda r'\nu}{\sqrt{1+\lambda^2}}}$.   We then have a convenient orthonormal basis for $\RR^4$ given as follows:
$$
v_1=e_1;\quad v_2=\frac{\bar{\nabla}_{e_1}e_1}{|\bar{\nabla}_{e_1}e_1|} = -r'e_2-\lambda r\nu-rx
$$
forming an orthonormal basis for $\Pi^\perp$; and
$$
v_3=p; \quad v_4=q
$$
forming an orthonormal basis for $\Pi$.  We compute the rates of change of these bases along the vector fields $E_1$ and $E_2$:  We have
\begin{equation}\label{eq:d1v1}
E_1v_1 = \frac{1}{\sqrt{\mu}}
\bar{\nabla}_{e_1}e_1 = \frac{1}{r\sqrt{\mu}}v_2=\sqrt{C}v_2;
\end{equation}
and (since the plane $\Pi^\perp$ is preserved and $v_1$ and $v_2$ are orthogonal)
\begin{equation}\label{eq:d1v2}
E_1v_2 = -\sqrt{C}v_1.
\end{equation}

%
%For convenience we denote by $\omega$ the quantity $\sqrt{\frac{\left(\frac{e_2\mu}{2\mu}\right)^2+1+\lambda_1^2}{\mu}}$.
We also have
\begin{align*}
E_1v_3 &= E_1p\\
&= E_1\left(\frac{\lambda_1 x-\nu}{\sqrt{1+\lambda_1^2}}\right)\\
&= \frac{1}{\sqrt{\mu(1+\lambda_1^2)}}e_1(\lambda_1 x-\nu)\\
&= \frac{1}{\sqrt{\mu(1+\lambda_1^2)}}(\lambda_1e_1-\lambda_1e_1)\\
&=0
\end{align*}
(since $e_1\lambda_1=0$).   Since $\Pi$ is constant it follows that $E_1v_4=E_1q=0$ also.
Now we consider the $E_2$ direction:  We have
\begin{equation}\label{eq:d2v1}
E_2v_1 = \bar{\nabla}_{e_2}e_1 = 0,
\end{equation}
and hence also $E_2v_2=0$.   Finally, a direct calculation shows that we have $E_2v_3=\alpha v_4$ and $E_2v_4=-\alpha v_3$, where $\alpha = \frac{2\mu}{r(1+\lambda_1^2)}$.  We observe that $\alpha$ is positive everywhere.

\begin{lemma}\label{circle-flow}
Let ${\mathbf a}=v_1(0,0)$, ${\mathbf b}=v_2(0,0)$, ${\mathbf c}=v_3(0,0)$ and ${\mathbf d}=v_4(0,0)$, and define $A(u)=\int_0^u\alpha(\tau)\,d\tau$.  Then
\begin{align*}
v_1(s,u) &= {\mathbf a}\cos(\sqrt{C} s) + {\mathbf b}\sin(\sqrt{C} s);\\
v_2(s,u) &= -{\mathbf a}\sin(\sqrt{C} s)+{\mathbf b}\cos(\sqrt{C} s);\\
v_3(s,u) &= {\mathbf c}\cos(A(u))+{\mathbf d}\sin(A(u));\\
v_4(s,u) &= -{\mathbf c}\sin(A(u))+{\mathbf d}\cos(A(u)).
\end{align*}
\end{lemma}

\begin{proof}
We have $\frac{\partial}{\partial u}v_i=0$ for $i=1,2$, and by
\eqref{eq:d1v1} and \eqref{eq:d1v2}
$$
\frac{\partial}{\partial s}v_1=\sqrt{C} v_2;\quad \frac{\partial}{\partial s}v_2=-\sqrt{C} v_1.
$$
The argument for $v_3$ and $v_4$ is similar.
\end{proof}

Now we can complete the proof of Theorem \ref{thm:symmetry}:  From the definitions of $v_1,\dots,v_4$ we have
$$
x(s,u)=-rv_2+\frac{\lambda}{\sqrt{1+\lambda^2}}v_3-\frac{r'}{\sqrt{1+\lambda^2}}v_4.
$$
Define $B(u)$ by
\begin{align*}
    \cos B(u)=\frac{\lambda}{\sqrt{1+\lambda^2}\sqrt{1-r^2}},\quad \sin B(u)=\frac{r'}{\sqrt{1+\lambda^2}\sqrt{1-r^2}}.
\end{align*}
From lemma \ref{circle-flow}, we have
\begin{align*}
    x(s,u)=&r(u)\sin(\sqrt{C}s){\mathbf a}-r(u)\cos(\sqrt{C}s){\mathbf b}\\
    &\qquad +\sqrt{1-r^2(u)}\cos(A(u)-B(u)){\mathbf c}+\sqrt{1-r^2(u)}\sin(A(u)-B(u)){\mathbf d}.
\end{align*}
Now choosing
\begin{align*}
    {\mathbf a}=&v_1(0,0)=(0,1,0,0)\\
    {\mathbf b}=&v_2(0,0)=(-1,0,0,0)\\
    {\mathbf c}=&v_3(0,0)=(0,0,\cos B(0),\sin B(0))\\
    {\mathbf d}=&v_4(0,0)=(0,0,-\sin B(0),\cos B(0)),
\end{align*}
then we  we get
\begin{align*}
    x(s,u)=&\biggl(r(u)\cos(\sqrt{C}s), r(u)\sin(\sqrt{C}s),\sqrt{1-r^2(u)}\cos\theta(u),\sqrt{1-r^2(u)}\sin\theta(u)\biggr),
\end{align*}
where $\theta(u)=A(u)-B(u)+B(0)$. Since $\theta(0)=0$ and (note that $B(u)=\arctan\dfrac{r'}{\lambda}$)
\begin{align*}
    \theta'(u)=&A'(u)-B'(u)\\
    =&\frac{2\mu}{(1+\lambda^2)r}-\frac 1{1+(\frac{r'}{\lambda})^2}\biggl(\frac{r''}{\lambda}-\frac{r'\lambda'}{\lambda^2}\biggr)\\
    =&\frac{\lambda(u) r(u)}{1-r^2(u)},
\end{align*}
that is $\displaystyle{\theta(u)=\int_0^u\frac{r(\tau)\lambda(\tau)}{1-r^2(\tau)}d\tau}$ (see \cite{Per}). Let $v=\sqrt{C}s$, we conclude that $\Sigma$ is given by (also compare with \cite{Ot1,Per})
\begin{align}
    F(u,v)=\biggl(r(u)\cos v,r(u)\sin v,\sqrt{1-r^2(u)}\cos\theta(u),\sqrt{1-r^2(u)}\sin\theta(u)\biggr),
\end{align}
where $0\leq v<2\pi$, $0\leq u< \frac{m\pi}{\sqrt{1+H^2}}$ and $m$ is some positive integer.

\end{proof}

\section{Monotonicity of the period}

In this section we complete the proof of Theorem \ref{main.theorem} by proving that the period function $K(H,C)$ is monotone in $C$, which implies that there can be at most one embedded CMC torus (up to congruence) with given values of $H$ and $m$.

Define the number (see \cite{Per})
\begin{equation}\label{eq:4.1}
K(H,C)=\int_{\frac{t^2_1}{C}}^{\frac{t^2_2}{C}} \frac{(Hu+C^{-1})}
{\sqrt{u}(1-u)\sqrt{-u^2(1+H^2)+(1-2HC^{-1})u-C^{-2})}}du,
\end{equation}
where $C$ is a constant greater than $2(H+\sqrt{1+H^2})$  and $t_1$ and $t_2$ are defined by \eqref{eq:3.18}.  We need the following result (see Theorem 3.1 of \cite{Per}, also \cite{DD}, \cite{BL}, \cite{LW}, or \cite{WCL})

\begin{proposition}
Suppose that $F: \Sigma \to S^3$ is a rotational torus in $S^3$, which is not a Clifford torus and is given by \eqref{eq:3.19} Then $F(\Sigma)$ is an embedded torus if and only if
\begin{equation}\label{eq:4.2}
K(H,C)=\frac{2\pi}{m}
\end{equation}
for some positive integer $m$.
\end{proposition}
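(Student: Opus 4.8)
The plan is to translate the question into one about a plane curve. Write $T=\pi/\sqrt{1+H^2}$ for the period of $r$, and observe that, since $\Sigma$ is a torus and not a Clifford torus, we have $C>2(H+\sqrt{1+H^2})$, so $r^2=g^2/C$ oscillates strictly between $t_1^2/C$ and $t_2^2/C$; the computation after \eqref{eq:3.18} gives $0<r<1$ throughout, and $\theta'=\frac{r\lambda_1}{1-r^2}>0$ everywhere since $\lambda_1=H+\mu>0$. Thus $\theta$ is a strictly increasing diffeomorphism of $\RR$, and a direct check shows $F$ is an immersion. From the explicit form of $F$ one reads off that $F(u_1,v_1)=F(u_2,v_2)$ holds if and only if $r(u_1)=r(u_2)$, $\theta(u_1)\equiv\theta(u_2)\ (\mathrm{mod}\ 2\pi)$ and $v_1\equiv v_2\ (\mathrm{mod}\ 2\pi)$ — take the norms of the first and last pairs of coordinates, using $r>0$ and $1-r^2>0$ to recover $v$ and $\theta\bmod 2\pi$. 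A short computation then shows the period lattice of $F$ is exactly $\langle(mT,0),(0,2\pi)\rangle$, where $m$ is the least positive integer with $mK(H,C)\in 2\pi\ZZ$ (see the next paragraph for why $\theta$ changes by $K(H,C)$ over one period $T$ of $r$); hence $F(\Sigma)$ is embedded precisely when the profile curve $c(u)=(r(u),\theta(u)\bmod 2\pi)$ is injective on $[0,mT)$.

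The second ingredient is the identity $\theta(u+T)-\theta(u)=K(H,C)$. Since $r$ is $T$-periodic and $\lambda_1$ is a function of $r$, the left side is independent of $u$ and equals $\int_0^T\frac{r\lambda_1}{1-r^2}\,du$. I would evaluate this by the substitution $w=r^2$, which is exactly the integration variable in \eqref{eq:4.1}: using $(r')^2+r^2(1+\lambda_1^2)=1$ from \eqref{eq:3.22} and $\lambda_1=H+\frac1{Cr^2}$ (from $r^2=1/(C\mu)$), one gets $r\,|r'|=\sqrt{-(1+H^2)w^2+(1-2HC^{-1})w-C^{-2}}$ and $\frac{r\lambda_1}{1-r^2}=\frac{Hw+C^{-1}}{\sqrt w\,(1-w)}$; since $w$ sweeps the interval $[t_1^2/C,t_2^2/C]$ once while $r$ increases and once while it decreases over a period, the change of variables turns $\int_0^T\frac{r\lambda_1}{1-r^2}\,du$ into the integral \eqref{eq:4.1}. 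Consequently $\theta$ increases by $mK(H,C)=2\pi N$ over the full parameter interval $[0,mT)$, with $N$ a positive integer and $\gcd(N,m)=1$; in particular ``$K(H,C)=2\pi/m$ for some positive integer $m$'' is equivalent to $N=1$.

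Given these two ingredients, both implications are short. If $N=1$, then $u\mapsto\theta(u)$ is a bijection of $[0,mT)$ onto $[0,2\pi)$, so $\theta(u_1)\equiv\theta(u_2)$ forces $u_1=u_2$; hence $c$ is injective, $F$ is an injective immersion of a compact torus, and $F(\Sigma)$ is embedded. Conversely, suppose $N\geq 2$, and let $\rho$ be $r$ reparametrized by $\theta$ (legitimate, as $\theta$ is an increasing diffeomorphism), a function of period $K(H,C)$. Then $\psi(\theta)=\rho(\theta+2\pi)-\rho(\theta)$ also has period $K(H,C)$ and integral zero over a period, hence a zero $\theta_1$. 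Taking $u_1,u_2$ to be the $\theta$-preimages of $\theta_1$ and $\theta_1+2\pi$, we have $r(u_1)=r(u_2)$ and $\theta(u_1)\equiv\theta(u_2)\ (\mathrm{mod}\ 2\pi)$, while $0<u_2-u_1<mT$ because $\theta$ increases by $2\pi N\geq 4\pi$ over $[u_1,u_1+mT]$; reducing $u_1,u_2$ into $[0,mT)$ keeps them distinct. Then $F(u_1,0)=F(u_2,0)$ with distinct arguments in $\Sigma$, contradicting embeddedness. (Alternatively one may invoke the topological fact that a non-contractible simple closed curve in the annulus $(t_1/\sqrt C,t_2/\sqrt C)\times(\RR/2\pi\ZZ)$ has winding number $\pm1$, applied to $c$.) This proves the proposition.

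The step needing the most care is the reduction in the first paragraph: one must be sure that the $S^1$-symmetry of $\Sigma$ hides no self-intersections, and this is exactly where $0<r<1$ enters, keeping the profile curve $c$ away from the two circles $\{x_1=x_2=0\}$ and $\{x_3=x_4=0\}$ on which the chosen parametrization degenerates. The change of variables identifying $K(H,C)$ with $\theta(u+T)-\theta(u)$ is the main calculation but is routine (and is essentially carried out in \cite{Per}), and once both ingredients are in place the two implications above are immediate.
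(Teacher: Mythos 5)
The paper does not supply its own proof of this proposition: it is cited to Theorem~3.1 of Perdomo \cite{Per} (with further pointers to \cite{DD}, \cite{BL}, \cite{LW}, \cite{WCL}). So there is no in-paper argument to compare against, and the question is only whether your self-contained proof stands on its own.

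Your reduction to the profile curve $c(u)=(r(u),\theta(u)\bmod 2\pi)$ and the identity $\theta(u+T)-\theta(u)=K(H,C)$ are both correct. In particular, the change of variables $w=r^2$ with $(rr')^2=-(1+H^2)w^2+(1-2HC^{-1})w-C^{-2}$ and $\tfrac{r\lambda_1}{1-r^2}=\tfrac{Hw+C^{-1}}{\sqrt w(1-w)}$ does turn $\int_0^T\theta'\,du$ into \eqref{eq:4.1}, with the two monotone half-periods of $r$ each contributing half. The ``period lattice'' computation and the observation that $T$ is the primitive period of $r$ (from the explicit formula for $g$, using $C>2(H+\sqrt{1+H^2})$ in the non-Clifford case) are also fine, as is the sufficiency direction: $N=1$ forces $\theta$ to be a bijection of $[0,mT)$ onto an interval of length $2\pi$, hence $c$ and therefore $F$ are injective on the fundamental domain.

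There is one logical step you pass over too quickly in the necessity direction. You locate $u_1\neq u_2$ in $[0,mT)$ with $F(u_1,0)=F(u_2,0)$ and conclude ``contradicting embeddedness.'' But failure of $F$ to be injective on $\Sigma$ is not by itself the same as failure of $F(\Sigma)$ to be an embedded submanifold: a priori $\bar F:\Sigma\to F(\Sigma)$ could be a $k$-fold covering of an embedded torus with $k\geq 2$. You need to rule this out. One clean way: if $F(\Sigma)$ were embedded, the quotient profile $\bar c$ would be a simple closed curve in the annulus of winding number $\pm 1$ (your own ``alternative'' remark), and $c$ would be a $k$-fold cover of it. Since $r$ is a well-defined function along $\bar c$ and $r\circ c$ has primitive period $T$, the period $mT/k$ of $\bar c$ must lie in $T\ZZ$, i.e.\ $k\mid m$; on the other hand winding number $\pm 1$ for $\bar c$ means $(m/k)K(H,C)=2\pi$, so $k=N$. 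Thus $N\mid m$, which together with $\gcd(N,m)=1$ forces $N=1$. (Equivalently, one can show the self-intersection produced by your intermediate-value argument is transversal — a tangential intersection would force $u_2-u_1\in T\ZZ$ by the explicit formula for $g$, leading to the same divisibility contradiction — and a transversal self-intersection of the profile curve produces a genuine non-manifold point of $F(\Sigma)$.) With this point filled in, your proof is complete and gives a correct, self-contained argument for the cited result.
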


We can check directly
\begin{equation}\label{eq:4.3}
K(H,C)\to
\frac{\sqrt{2}\pi}{(1+H^2)^{\frac{1}{4}}(H+\sqrt{1+H^2})^{\frac{1}{2}}},\;\; {\rm when}\;\;
C\to a(H)
\end{equation}
where
\begin{equation}\label{eq:4.4}
a(H)=2(H+\sqrt{1+H^2})^+.
\end{equation}
and
\begin{equation}\label{eq:4.5}
K(H,C)\to 2\text{\rm arccot}(H),\;\;\; {\rm when}\;\;
C\to \infty
\end{equation}
The following result can be found in  Perdomo's paper in \cite{Per}) (also see Ripoll's paper \cite{Ri})

\begin{proposition} If $H\not=0, \pm \frac{1}{\sqrt{3}}$, there exist compact embedded tori in $S^3$ with constant mean curvature $H$, which are not Clifford tori.
In fact, for any integer $m\geq 2$, if $H$ satisfies
\begin{equation}\label{eq:4.6}
cot\frac{\pi}{m}<H<\frac{m^2-2}{2\sqrt{m^2-1}},
\end{equation}
then there exists a compact embedded torus in $S^3$ with constant mean curvature $H$ whose isometry group contains $O(2)\times Z_m$ which is not a Clifford torus.
\end{proposition}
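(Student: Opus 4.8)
The plan is to build the tori explicitly from the rotational classification of Section~\ref{sec:symmetry} and then to locate those with the prescribed symmetry by applying the intermediate value theorem to the period function, using its two limiting values \eqref{eq:4.3} and \eqref{eq:4.5}. First I would recall that for fixed $H>0$ and any constant $C>a(H)=2(H+\sqrt{1+H^2})$, solving the profile equation \eqref{eq:3.17} produces (as in Section~\ref{sec:symmetry} and \cite{Per}) a rotationally symmetric surface of constant mean curvature $H$,
\begin{equation*}
F(u,v)=\bigl(r(u)\cos v,\ r(u)\sin v,\ \sqrt{1-r^2(u)}\cos\theta(u),\ \sqrt{1-r^2(u)}\sin\theta(u)\bigr),
\end{equation*}
where $r=g/\sqrt{C}$ is periodic in $u$ with period $T=\pi/\sqrt{1+H^2}$ and oscillates between $t_1/\sqrt{C}$ and $t_2/\sqrt{C}$, and $\theta'=\lambda r/(1-r^2)>0$, so that over each $u$-period $\theta$ increases by exactly the number $K(H,C)$ of \eqref{eq:4.1} (a change of variables from $u$ to $r^2$ identifies the two). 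By the embeddedness criterion \eqref{eq:4.2} this immersion closes up to an embedded torus precisely when $K(H,C)=2\pi/m$ for a positive integer $m$; and in that case advancing $u$ by one period $T$ fixes the first two coordinates and rotates the $(x_3,x_4)$-plane by $2\pi/m$, so the torus is invariant under that order-$m$ rotation (generating a $\ZZ_m$) and under the group $O(2)$ acting on the $(x_1,x_2)$-plane, whence its isometry group contains $O(2)\times\ZZ_m$. Since $C>a(H)$ forces the strict inequality $t_1<t_2$, the profile $r$ is non-constant and so the torus is not a Clifford torus.

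It then remains to exhibit, whenever \eqref{eq:4.6} holds, a value $C\in(a(H),\infty)$ with $K(H,C)=2\pi/m$. The map $C\mapsto K(H,C)$ is continuous on $(a(H),\infty)$ and, by \eqref{eq:4.3} and \eqref{eq:4.5}, has limits $L(H):=\frac{\sqrt{2}\,\pi}{(1+H^2)^{1/4}(H+\sqrt{1+H^2})^{1/2}}$ as $C\to a(H)^+$ and $2\operatorname{arccot}(H)$ as $C\to\infty$; by the intermediate value theorem it therefore takes every value strictly between these two limits. (The monotonicity of $K(H,\cdot)$ proved in Proposition~\ref{prop:monotonic} gives the stronger statement that its range is exactly the open interval between the limits --- which is what yields the uniqueness part of Theorem~\ref{main.theorem} --- but here the weaker statement suffices.) So I would reduce the problem to checking that \eqref{eq:4.6} is equivalent to the two-sided bound $2\operatorname{arccot}(H)<2\pi/m<L(H)$.

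That equivalence I would verify by elementary manipulation. Since $\cot$ is strictly decreasing on $(0,\pi)$ and both $\operatorname{arccot}(H)$ and $\pi/m$ lie in that interval, $2\operatorname{arccot}(H)<2\pi/m$ is the same as $H>\cot\frac{\pi}{m}$. At the other end, clearing denominators and squaring, $2\pi/m<L(H)$ is equivalent to $(1+H^2)^{1/2}(H+\sqrt{1+H^2})<\frac{m^2}{2}$, i.e. to $H\sqrt{1+H^2}<\frac{m^2-2-2H^2}{2}$; and since $\bigl(\frac{m^2-2}{2\sqrt{m^2-1}}\bigr)^2=\frac{(m^2-2)^2}{4(m^2-1)}<\frac{m^2-2}{2}$, the bound in \eqref{eq:4.6} keeps the right-hand side positive, so one more squaring converts this into the equivalent inequality $4H^2(m^2-1)<(m^2-2)^2$, that is $H<\frac{m^2-2}{2\sqrt{m^2-1}}$. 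Thus \eqref{eq:4.6} is exactly $2\pi/m\in(2\operatorname{arccot}(H),L(H))$, which produces the desired embedded torus; the integer $m$ is automatically at least $2$ since $L(H)\le\sqrt{2}\,\pi<2\pi$ for every $H\ge0$. The only step that requires genuine care is this last computation: one must check that each squaring is a true two-way equivalence, which is exactly why the elementary estimate $\frac{(m^2-2)^2}{4(m^2-1)}<\frac{m^2-2}{2}$ has to be brought in. Everything else --- the rotational parametrization, the embeddedness criterion, and the limits \eqref{eq:4.3}--\eqref{eq:4.5} --- is used directly.
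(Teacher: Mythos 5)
Your proposal is correct and follows exactly the route the paper outlines. The paper itself offers no proof of this proposition — it simply cites Perdomo and Ripoll — but in the proof of Theorem~\ref{main.theorem} it uses precisely the ingredients you assemble: the period function $K(H,\cdot)$ is continuous on $(a(H),\infty)$ with limits \eqref{eq:4.3} and \eqref{eq:4.5}, so it attains $2\pi/m$ (by the intermediate value theorem, or, using Proposition~\ref{prop:monotonic}, exactly once) precisely when $2\pi/m$ lies strictly between those limits; then one translates that two‑sided inequality into \eqref{eq:4.6}. You correctly carry out that translation, including the one step that genuinely requires care — justifying the second squaring by checking that $(m^2-2)^2/\bigl(4(m^2-1)\bigr)<(m^2-2)/2$ keeps $m^2-2-2H^2$ positive, so the squaring is reversible — and you correctly observe that $L(H)\le\sqrt2\pi$ forces $m\ge2$. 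The description of the resulting surface (profile from \eqref{eq:3.17}, $\mathbb{Z}_m$ from the period of $\theta$, $O(2)$ from the $(x_1,x_2)$-rotation) and the observation that $t_1<t_2$ rules out the Clifford torus are likewise exactly what Perdomo's construction (and the paper's Section~\ref{sec:symmetry}) gives. In short, your proof supplies the details the paper leaves to the references, using the same strategy.
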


Now we prove the following result

\begin{proposition}\label{prop:monotonic}
 For any nonnegative real number $H$, $K(H,C)$ is monotone decreasing in $2(H+\sqrt{1+H^2})<C<\infty$.
\end{proposition}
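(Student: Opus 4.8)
The plan is to analyze the integral $K(H,C)$ in \eqref{eq:4.1} directly as a function of the parameter $C$, differentiate under the integral sign, and show the derivative is negative. The immediate obstacle is that the limits of integration $t_1^2/C$ and $t_2^2/C$ both depend on $C$, and moreover the integrand is singular at both endpoints (like $(u-t_1^2/C)^{-1/2}$ and $(t_2^2/C-u)^{-1/2}$), so naive differentiation produces divergent boundary terms. The standard remedy is a change of variables that fixes the endpoints: substitute $u = \frac{t_1^2}{C}\cos^2\phi + \frac{t_2^2}{C}\sin^2\phi$ (or an equivalent trigonometric/Jacobi-type substitution tailored to the quartic $\xi$), so that the square-root singularities are absorbed and $K(H,C)$ becomes an integral over a fixed interval, say $\phi\in[0,\pi/2]$, with a smooth integrand depending on $C$ through coefficients only.

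First I would rewrite the quartic under the radical, $-u^2(1+H^2)+(1-2HC^{-1})u - C^{-2}$, in factored form $(1+H^2)(u - t_1^2/C)(t_2^2/C - u)$, using that $t_1^2/C$ and $t_2^2/C$ are its two roots; note $t_1^2 t_2^2 = C^2/(1+H^2)\cdot C^{-2}\cdot\ldots$ — more precisely from \eqref{eq:3.19} one reads off $t_1^2 + t_2^2 = (C-2H)/(1+H^2)$ and $t_1^2 t_2^2 = 1/(1+H^2)$, which will be the key algebraic relations. After the endpoint-fixing substitution, $K(H,C) = \int_0^{\pi/2} G(\phi, C)\,d\phi$ for an explicit rational-trigonometric $G$, and then $\frac{\partial K}{\partial C} = \int_0^{\pi/2} \frac{\partial G}{\partial C}\,d\phi$ with no boundary terms. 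The heart of the argument is to show this last integral is negative; I expect this will not follow from pointwise negativity of $\partial_C G$ but will require an integration by parts (or the trick, used for period integrals of this type, of writing $\partial_C G$ as a combination of a total derivative and a manifestly negative term), exploiting the ODE \eqref{eq:3.17} satisfied by $g$ — i.e. the fact that $r$, $\lambda$ solve the geometric system — to simplify the integrand.

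The main obstacle, and where I would concentrate effort, is precisely this final sign determination: controlling $\int_0^{\pi/2}\partial_C G\,d\phi$. The two limiting values \eqref{eq:4.3} and \eqref{eq:4.5} give a useful consistency check (as $C\to a(H)^+$, $K \to \sqrt{2}\pi(1+H^2)^{-1/4}(H+\sqrt{1+H^2})^{-1/2}$, and as $C\to\infty$, $K\to 2\,\mathrm{arccot}\,H$, and one can verify the former exceeds the latter for all $H\ge 0$, consistent with monotone decrease), but they do not by themselves give monotonicity. I would look for an auxiliary quantity — for instance the "energy"-type combination arising from multiplying \eqref{eq:3.15} by an integrating factor, as was done to pass from \eqref{eq:3.15} to \eqref{eq:3.16} — whose derivative in $C$ has a fixed sign, and express $\partial_C K$ in terms of it. If a clean closed form for $G$ is available (the excerpt's explicit formula $g(u)=\sqrt{\frac{C-2H+\sqrt{C^2-4-4HC}\,\sin(2\sqrt{1+H^2}\,u)}{2(1+H^2)}}$ suggests the period integral may be reducible to elementary or elliptic-type form), it may even be possible to carry out the differentiation explicitly; otherwise the integration-by-parts route, combined with the algebraic identities for $t_1^2, t_2^2$ above, is the fallback. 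A careful bookkeeping of which terms are total derivatives versus sign-definite is the crux, and I would expect the computation — while in principle routine once set up — to be the longest and most delicate part of the argument, which is consistent with the authors' acknowledgement of assistance in checking it.
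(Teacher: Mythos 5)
The paper's proof is genuinely different from what you propose, and your proposal has a real gap at its crux. The paper (following Otsuki \cite{Ot2}) rewrites $K(H,C)$, after the substitution $a=C^{-1}$, as a contour integral $T(H,a)=-\tfrac12\int_{\gamma_1}\frac{Hz+a}{(1-z)f(z)}\,dz$ over a loop $\gamma_1$ encircling the two branch points $x_1,x_2\in(0,1)$. Because the contour is fixed (it does not depend on $a$), differentiation under the integral sign is immediate and gives $T'(H,a)=-\tfrac12\int_{\gamma_1}\frac{z^2}{f(z)^3}\,dz$. The contour is then deformed across the pole at $z=1$ (contributing a constant $-\pi$ that disappears upon differentiation) out to a large circle plus a keyhole around the negative real axis; the large-circle and small-circle contributions vanish, and the two sides of the branch cut combine to yield $T'(H,a)=\int_{-\infty}^0\frac{x^2}{\bigl(\sqrt{(1+H^2)(-x)(x_1-x)(x_2-x)}\bigr)^3}\,dx$, which is \emph{manifestly} positive. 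The entire sign determination is thus outsourced to a contour deformation; no integration by parts or auxiliary sign-definite quantity is needed.

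Your proposal, by contrast, stays in the real domain: fix the endpoints by a trigonometric substitution, differentiate under the integral, then try to show $\int_0^{\pi/2}\partial_C G\,d\phi<0$ by integration by parts or by finding an auxiliary monotone quantity. This is a plausible strategy and the endpoint-fixing step is sound (and is, in effect, what one also does implicitly before setting up the contour integral). But you explicitly leave the decisive step unresolved: you write ``I expect this will not follow from pointwise negativity,'' ``I would look for an auxiliary quantity,'' ``the fallback,'' etc. That missing step is not a detail --- it \emph{is} the content of the proposition. Without a concrete mechanism that forces the sign, the argument does not close. The limiting values \eqref{eq:4.3}--\eqref{eq:4.5} do give the right ordering, as you note, but as you also correctly observe they cannot by themselves yield monotonicity. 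The thing you have not found, and which the paper supplies, is the observation that passing to the complex plane and deforming the contour converts the ambiguous-sign real integral into one whose integrand never changes sign. If you want to pursue a purely real-variable route, you would need to actually produce the integration-by-parts identity (or the exact total-derivative decomposition) that makes the sign manifest; as written, the proposal identifies the obstacle but does not overcome it.
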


\begin{remark} When $H=0$, Proposition \ref{prop:monotonic} was proved by T. Otsuki in \cite{Ot2}. We note that our proof here is simpler than Ostuki's \cite{Ot2} even in minimal case.
\end{remark}

\begin{proof}
%The proof is motivated by T. Otsuki's proof in \cite{Ot2}*{pages 264--266, Theorem B} for the minimal surface case.

Denote $x_1=\frac{t_1^2}{C},x_2=\frac{t_2^2}{C}$.  By \eqref{eq:3.18}, we note that $x_1$ and $x_2$ satisfy
\begin{equation}\label{eq:4.7}
0< x_1\equiv \frac{t_1^2}{C} < x_2\equiv \frac{t_2^2}{C}<1.
\end{equation}

Let $a=C^{-1}$, and write $T(H,a)=K(H,\frac{1}{a})$.  To prove $K'(H,C)<0$, it suffices to show $T'(H,a)>0$, where the prime denotes the derivative with respect to $a$. $T(H,a)$ is given by
$$
T(H,a)=\int_{x_1}^{x_2}\frac{Hu+a}{(1-u)\sqrt{u}\sqrt{1+H^2}\sqrt{(u-x_1)(x_2-u)}}du.
$$

On the region
$$\Omega=\mathbb{C}-\{z\in \mathbb{C}| z\leq 0, \ or\ x_1\leq z\leq x_2, \ or \ z=1\},$$
the function
$$
\frac{Hz+a}{(1-z)f(z)}\label{eq:4.7}
$$
is well defined and holomorphic, where
$$
f(z)=-\sqrt{1+H^2}i\sqrt{-z}(x_2-z)\sqrt{\frac{z-x_1}{x_2-z}}.
$$

We note that
$$
f(z)^2=(1+H^2)z(x_2-z)(z-x_1)=z[-(1+H^2)z^2+(1-2Ha)z-a^2].
$$

\begin{figure}
\includegraphics[width=5in]{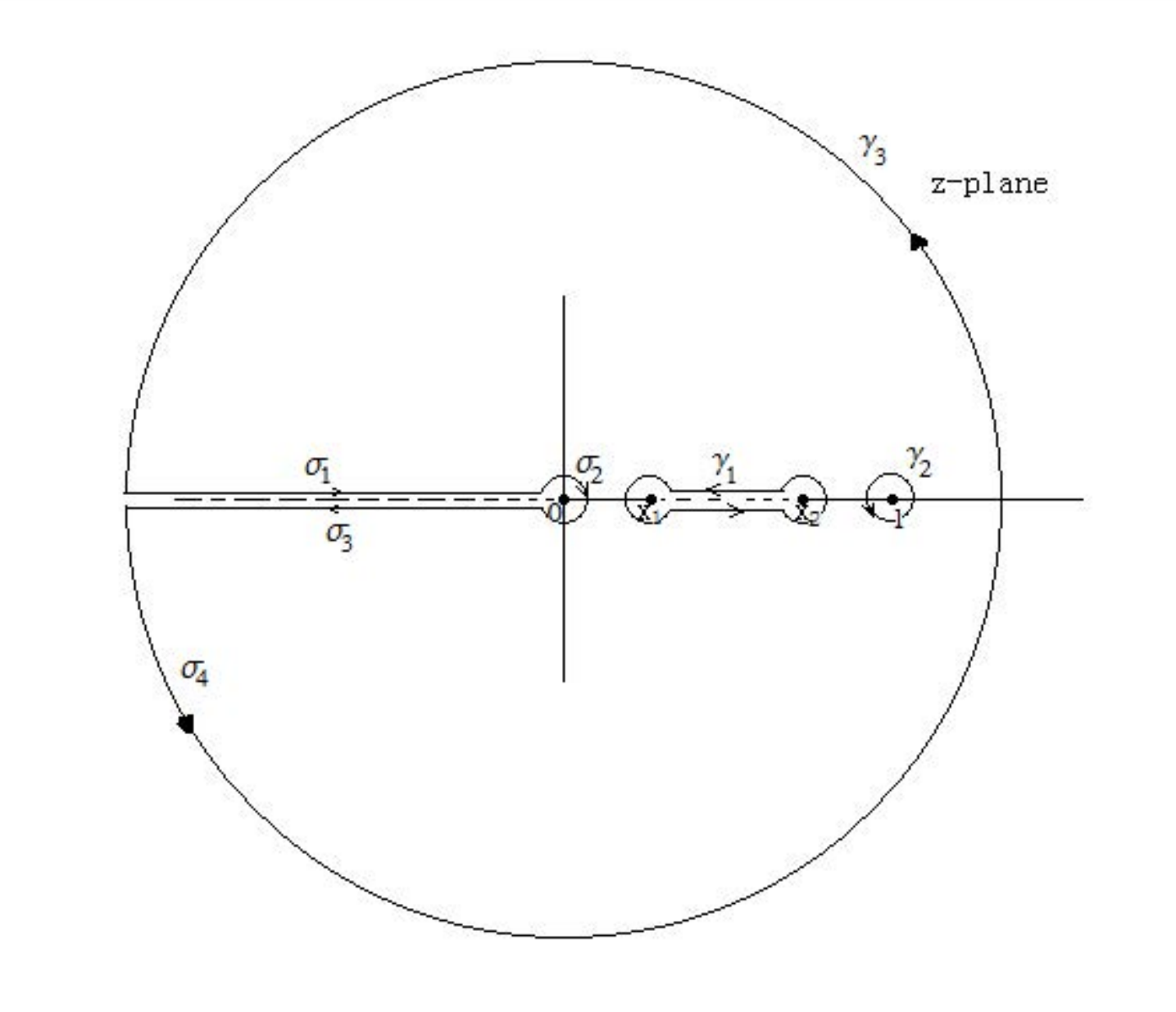}
\caption{Paths of integration for the proof of proposition \ref{prop:monotonic}}
\end{figure}

Now, choose a closed curve $\gamma_1$ in $\Omega$ such that its interior contains the only two critical points $x_1=\frac{t_1^2}{C},x_2=\frac{t_2^2}{C}$. We can assume that $\gamma_1$ is chosen as in the figure.

In analogy with Otsuki's observation (\cite{Ot2}), we can check
\begin{equation}\label{eq:4.8}
T(H,a)=-\frac{1}{2} \int_{\gamma_1} \frac{Hz+a}{(1-z)f(z)} dz.
\end{equation}

From \eqref{eq:4.8}, we can check by a direct calculation
\begin{equation}\label{eq:4.9}
T'(H,a)=\frac{dT(H,a)}{da}=
-\frac{1}{2} \int_{\gamma_1}\frac{z^2}{f(z)^3}dz.
\end{equation}

Next we deform the curve $\gamma_1$ to another curve $\gamma_3$ which will pass through a critical point $z=1$, which will cause a difference of $\pi$, a constant. Let $\gamma_3$ be the closed curve shown in the picture, a large circle with radius $R$ large (see $\sigma_4$), a small circle with radius $r$ and with center at $0$ (see $\sigma_2$), two straight line close to the half line $x<0$ on the real axis (see $\sigma_1$ and $\sigma_3$).
Then we have
\begin{equation}\label{eq:4.10}
T(H,a)=-\pi-\frac{1}{2} \int_{\gamma_3} \frac{Hz+a}{(1-z)f(z)} dz.
\end{equation}

Thus we have from \eqref{eq:4.10} and \eqref{eq:4.9}
\begin{equation}\label{eq:4.11}
T'(H,a)=-\frac{1}{2} \int_{\gamma_3}\frac{z^2}{f(z)^3}dz.
\end{equation}

The following identities can be checked directly

\begin{equation}\label{eq:4.12}
\begin{aligned}
&\int_{\sigma_1}
\frac{z^2}{f(z)^3}dz\\
&=\int_{\sigma_3}
\frac{z^2}{f(z)^3}dz\\
&=\int_{-\infty}^{0} -\frac{x^2}{\left(\sqrt{x(-x^2(1+H^2)+(1-2Ha)x-a^2)}\right)^3}dx\\
&=
\int_{-\infty}^{0}-\frac{x^2}{\left(\sqrt{(1+H^2)(-x)(x_1-x)(x_2-x)}\right)^3}dx
\end{aligned}
\end{equation}
Here the only thing we need to explain is how the minus sign arises in the last line of \eqref{eq:4.12}. On $\sigma_1$, we have $x<0, y\to 0^+$, therefore
$$
\sqrt{-z}=\sqrt{-x-iy}\to -\sqrt{-x},\;\;
\sqrt{\frac{z-x_1}{x_2-z}}\to i\sqrt{\frac{x_1-x}{x_2-x}},
$$
$$
f(z)\to -\sqrt{1+H^2}\sqrt{-x}\sqrt{(x_1-x)(x_2-x)}.
$$

We also have
\begin{equation}\label{eq:4.13}
\lim\limits_{r\rightarrow 0}\int_{\sigma_2}\frac{z^2}{f(z)^3}dz=0
\end{equation}
and
\begin{equation}\label{eq:4.14}
\lim\limits_{R\rightarrow \infty}\int_{\sigma_4}
\frac{z^2}{f(z)^3}dz=0
\end{equation}
Combining \eqref{eq:4.12}, \eqref{eq:4.13} and \eqref{eq:4.14}, we obtain
\begin{equation}\label{eq:4.15}
\begin{aligned}
&T'(H,a)=-\frac{1}{2}\int_{\gamma_3}
\frac{z^2}{f(z)^3}dz\\
&=-\frac{1}{2}\left(\int_{\sigma_1}+\int_{\sigma_2}+\int_{\sigma_3}+\int_{\sigma_4}\right)
\frac{z^2}{f(z)^3}dz\\
&=
\int_{-\infty}^{0}\frac{x^2}{\left(\sqrt{(1+H^2)(-x)(x_1-x)(x_2-x)}\right)^3}dx
>0.
\end{aligned}
\end{equation}
\end{proof}

\begin{proof}[Proof of Theorem \ref{main.theorem}] By Theorem \ref{thm:symmetry}, every embedded CMC torus is a surface of rotation, and by the results of \cite{Per} this is produced from a solution of equation \eqref{eq:3.16} for some value of $C$ such that \eqref{eq:4.2} holds for some $m\geq 2$.  The monotonicity of $K(H,C)$ implies that this occurs for at most one value of $C$, and the limits \eqref{eq:4.3} and \eqref{eq:4.5} imply that there exists such a $C$ if and only if
\begin{equation}\label{eq:4.16}
2\text{\rm arccot}(H)< \frac{2\pi}{m}<\frac{\sqrt{2}\pi}{(1+H^2)^{\frac{1}{4}}(H+\sqrt{1+H^2})^{\frac{1}{2}}}.
\end{equation}

If $H=0$, we have that $K(0,C)$ takes values for $2<C<\infty$ in the range
\begin{equation}\label{eq:4.17}
\pi<K(0,C)<\sqrt{2}\pi,
\end{equation}
so there exists no integer $m\geq 2$ satisfying
$K(0,C)=\frac{2\pi}{m}$, and there are no compact embedded minimal tori in $S^3$ other than the Clifford tori. This was proved for the rotational symmetric case by Otsuki \cite{Ot1} and in general by Brendle \cite{Br}.

If $H=\pm \frac{1}{\sqrt{3}}$, we can assume $H=\frac{1}{\sqrt{3}}$ by reversing the unit normal vector if necessary. In this case $K(\frac{1}{\sqrt{3}},C)$ takes values for $2\sqrt{3}<C<\infty$ in the range
\begin{equation}\label{eq:4.18}
\frac{2}{3}\pi<K(\frac{1}{\sqrt{3}},C)<\pi,
\end{equation}
thus there exists no integer $m\geq 2$ such that
$K(\frac{1}{\sqrt{3}},C)=\frac{2\pi}{m}$, and consequently there are no compact embedded torus in $S^3$ with $H=\frac{1}{\sqrt{3}}$, other than the Clifford torus.  This rigidity was previously unknown even in the rotationally symmetric case, though it is suggested by the results of Perdomo \cite{Per} and Ripoll \cite{Ri}.

For all other values of $H$ there exists some $m$ such that equation \eqref{eq:4.2} holds for some $C$, and consequently there always exist embedded CMC tori which are not congruent to Clifford tori.  The number of these (up to congruence) is precisely the number of values of $m$ for which \eqref{eq:4.16} holds.

This completes the proof of theorem \ref{main.theorem}.
\end{proof}

\begin{bibdiv}
\begin{biblist}

\bib{Ab}{article}{
   author={Abresch, U.},
   title={Constant mean curvature tori in terms of elliptic functions},
   journal={J. Reine Angew. Math.},
   volume={374},
   date={1987},
   pages={169--192},
  }

\bib{Al}{article}{
   author={Aleksandrov, A. D.},
   title={Uniqueness theorems for surfaces in the large. I},
   language={Russian},
   journal={Vestnik Leningrad. Univ.},
   volume={11},
   date={1956},
   number={19},
   pages={5--17},
   }

\bib{An}{article}{
   author={Andrews, Ben},
   title={Non-collapsing in mean-convex mean curvature flow},
   eprint={arxiv:1108.0247}
   }

\bib{ALM}{article}{
   author={Andrews, Ben},
   author={Langford, Mathew},
   author={McCoy, James A.},
   title={Non-collapsing in fully nonlinear curvature flows},
   eprint={arxiv:1109.2200}
}

%\bib{Ar}{article}{
%   author={Aronszajn, N.},
%   title={A unique continuation theorem for solutions of elliptic partial
%   differential equations or inequalities of second order},
%   journal={J. Math. Pures Appl. (9)},
%   volume={36},
%   date={1957},
%   pages={235--249},
%   }

\bib{Bo}{article}{
   author={Bobenko, A. I.},
   title={All constant mean curvature tori in ${\bf R}^3,\;S^3,\;H^3$ in terms of theta-functions},
   journal={Math. Ann.},
   volume={290},
   date={1991},
   number={2},
   pages={209--245},
  }

%\bibitem{Bony}
%J.M.~Bony, \textit{Principe du maximum, in\'egalit\'e de Harnack et unicit\'e du probl\`eme de Cauchy pour les op\'erateurs elliptiques d\'eg\'en\'er\'es,} Ann. Inst. Fourier (Grenoble) 19, 277--304 (1969)

\bib{Br}{article}{
   author={Brendle, Simon},
   title={Embedded minimal tori in $S^3$ and the Lawson conjecture},
   eprint={ arxiv:1203.6597v2}
   }

\bib{BL}{article}{
   author={Brito, Fabiano},
   author={Leite, Maria Luiza},
   title={A remark on rotational hypersurfaces of $S^n$},
   journal={Bull. Soc. Math. Belg. S\'er. B},
   volume={42},
   date={1990},
   number={3},
   pages={303--318},
   }

\bib{Ch}{article}{
   author={Chern, Shiing Shen},
   title={On surfaces of constant mean curvature in a three-dimensional
   space of constant curvature},
   conference={
      title={Geometric dynamics},
      address={Rio de Janeiro},
      date={1981},
   },
   book={
      series={Lecture Notes in Math.},
      volume={1007},
      publisher={Springer},
      place={Berlin},
   },
   date={1983},
   pages={104--108},
   }

\bib{DD}{article}{
   author={do Carmo, M.},
   author={Dajczer, M.},
   title={Rotation hypersurfaces in spaces of constant curvature},
   journal={Trans. Amer. Math. Soc.},
   volume={277},
   date={1983},
   number={2},
   pages={685--709},
  }

\bib{HamiltonCSFIsoperim}{article}{
   author={Hamilton, Richard S.},
   title={Isoperimetric estimates for the curve shrinking flow in the plane},
   conference={
      title={Modern methods in complex analysis},
      address={Princeton, NJ},
      date={1992},
   },
   book={
      series={Ann. of Math. Stud.},
      volume={137},
      publisher={Princeton Univ. Press},
      place={Princeton, NJ},
   },
   date={1995},
   pages={201--222},
   }

\bib{HamiltonRFIsoperim}{article}{
   author={Hamilton, Richard S.},
   title={An isoperimetric estimate for the Ricci flow on the two-sphere},
   conference={
      title={Modern methods in complex analysis},
      address={Princeton, NJ},
      date={1992},
   },
   book={
      series={Ann. of Math. Stud.},
      volume={137},
      publisher={Princeton Univ. Press},
      place={Princeton, NJ},
   },
   date={1995},
   pages={191--200},
}

\bib{Ho1}{article}{
   author={Hopf, Heinz},
   title={\"Uber Fl\"achen mit einer Relation zwischen den
   Hauptkr\"ummungen},
   language={German},
   journal={Math. Nachr.},
   volume={4},
   date={1951},
   pages={232--249},
   }

\bib{Ho2}{book}{
   author={Hopf, Heinz},
   title={Differential geometry in the large},
   series={Lecture Notes in Mathematics},
   volume={1000},
   publisher={Springer-Verlag},
   place={Berlin},
   date={1983},
   pages={vii+184},
  }

%\bib{Huiskenconvex}{article}{
%   author={Huisken, Gerhard},
%   title={Flow by mean curvature of convex surfaces into spheres},
%   journal={J. Differential Geom.},
%   volume={20},
%   date={1984},
%   number={1},
%   pages={237--266},
%   }

\bib{Hu}{article}{
   author={Huisken, Gerhard},
   title={A distance comparison principle for evolving curves},
   journal={Asian J. Math.},
   volume={2},
   date={1998},
   number={1},
   pages={127--133},
}

\bib{Li}{article}{
   author={Li, Haizhong},
   title={Stability of surfaces with constant mean curvature},
   journal={Proc. Amer. Math. Soc.},
   volume={105},
   date={1989},
   number={4},
   pages={992--997},
   }

\bib{LW}{article}{
   author={Li, Haizhong},
   author={Wei, Guoxin},
   title={Compact embedded rotation hypersurfaces of $S^{n+1}$},
   journal={Bull. Braz. Math. Soc. (N.S.)},
   volume={38},
   date={2007},
   number={1},
   pages={81--99},
  }

\bib{NS}{article}{
   author={Nomizu, Katsumi},
   author={Smyth, Brian},
   title={A formula of Simons' type and hypersurfaces with constant mean
   curvature},
   journal={J. Differential Geometry},
   volume={3},
   date={1969},
   pages={367--377},
   }

\bib{Ot1}{article}{
   author={Otsuki, Tominosuke},
   title={Minimal hypersurfaces in a Riemannian manifold of constant
   curvature. },
   journal={Amer. J. Math.},
   volume={92},
   date={1970},
   pages={145--173},
  }

\bib{Ot2}{article}{
   author={Otsuki, Tominosuke},
   title={On a differential equation related with differential geometry},
   journal={Mem. Fac. Sci. Kyushu Univ. Ser. A},
   volume={47},
   date={1993},
   number={2},
   pages={245--281},
   }

\bib{Per}{article}{
   author={Perdomo, Oscar M.},
   title={Embedded constant mean curvature hypersurfaces on spheres},
   journal={Asian J. Math.},
   volume={14},
   date={2010},
   number={1},
   pages={73--108},
  }

\bib{PS}{article}{
   author={Pinkall, U.},
   author={Sterling, I.},
   title={On the classification of constant mean curvature tori},
   journal={Ann. of Math. (2)},
   volume={130},
   date={1989},
   number={2},
   pages={407--451},
   }

\bib{Ri}{thesis}{
   author={Ripoll, J.~B.},
   title={Superficies invariantes de curvatura memia constanto},
   date={1986},
   organization={IMPA}
}

\bib{SW}{article}{
   author={Sheng, Weimin},
   author={Wang, Xu-Jia},
   title={Singularity profile in the mean curvature flow},
   journal={Methods Appl. Anal.},
   volume={16},
   date={2009},
   number={2},
   pages={139--155},
   }

\bib{Si}{article}{
   author={Simons, James},
   title={Minimal varieties in riemannian manifolds},
   journal={Ann. of Math. (2)},
   volume={88},
   date={1968},
   pages={62--105},
  }

\bib{We}{article}{
   author={Wente, Henry C.},
   title={Counterexample to a conjecture of H. Hopf},
   journal={Pacific J. Math.},
   volume={121},
   date={1986},
   number={1},
   pages={193--243},
  }

\bib{WCL}{article}{
   author={Wei, Guoxin},
   author={Cheng, Qing-Ming},
   author={Li, Haizhong},
   title={Embedded hypersurfaces with constant $m$th mean curvature in a
   unit sphere},
   journal={Commun. Contemp. Math.},
   volume={12},
   date={2010},
   number={6},
   pages={997--1013},
   }

\bib{Ya}{article}{
   author={Yau, Shing Tung},
   title={Submanifolds with constant mean curvature. I, II},
   journal={Amer. J. Math.},
   volume={96},
   date={1974},
   pages={346--366; ibid. 97 (1975), 76--100},
   }

\bib{Wh}{article}{
   author={White, Brian},
   title={The size of the singular set in mean curvature flow of mean-convex
   sets},
   journal={J. Amer. Math. Soc.},
   volume={13},
   date={2000},
   number={3},
   pages={665--695 (electronic)},
  }

\end{biblist}
\end{bibdiv}

\end{document}